\newcommand{\LeftEqNo}{\let\veqno\@@leqno}
\theoremstyle{definition}
\newtheorem{thm}{Theorem}
\newtheorem{rem}[thm]{Remark}
\newtheorem{lem}[thm]{Lemma}
\newcommand\R{{\mathbb{R}}}
\renewcommand\natural{\mathbb{N}}
\newcommand\N{\mathbb{N}}
\newcommand\eu{{\rm e}}
\newcommand{\e}{\varepsilon}
\newcommand{\wt}{\widetilde}
\newcommand{\widebar}[1]{\mbox{\kern1pt\hbox{
\vbox{\hrule height 0.5pt \kern0.25ex
        \hbox{\kern-0.05em \ensuremath{#1 }\kern-0.05em}}}}\kern-0.1pt}
\newcommand\il{\left<}
\newcommand\ir{\right>}
\newcommand{\dint}{\, {\rm d} }
\newcommand{\abs}[1]{\left\vert #1 \right\vert}
\newcommand{\norm}[1]{\left\Vert #1 \right\Vert}
\newcommand{\F}{\mathcal{F}}
\newcommand{\M}{\mathcal{M}}
\newlength{\fixboxwidth}
\renewcommand{\rho}{{\varrho }}
\title{\vspace{-0.5cm}
Reproducing Kernels of Sobolev Spaces on $\R^d$
\\
and Applications to \\
Embedding Constants and Tractability
}
\author{Erich Novak\footnote{
Partially supported by the DFG-Priority Program 1324.},
\\
Mathematisches Institut, Universit\"at Jena\\
Ernst-Abbe-Platz 2, 07743 Jena, Germany\\
email: erich.novak@uni-jena.de\\
Mario Ullrich
\\
Institut f\"ur Analysis, Johannes Kepler Universit\"at\\
Linz, Austria
\\
email: mario.ullrich@jku.at\\
\qquad
Henryk Wo\'zniakowski\footnote{Partially
supported by NSC, Poland,
DEC-21013/09/B/ST1/04275.}\\
Department of Computer Science, Columbia University,\\
New York, NY 10027, USA, and\\
Institute of Applied Mathematics, University of Warsaw\\
ul. Banacha 2, 02-097 Warszawa, Poland\\
email:\ henryk@cs.columbia.edu\\
Shun Zhang\footnote{Partially
supported by the NNSF of China
(11301002), Anhui Provincial Natural Science Foundation (1408085QF107)
and Talents Youth Fund of Anhui Province Universities
(2013SQRL006ZD).
}\\
School of Computer Science and Technology, Anhui
University,\\
 Hefei 230601, China\\
 email:\ shzhang27@163.com}
\begin{document}

\maketitle

\begin{abstract}
The standard Sobolev space
$W^s_2(\R^d)$, with arbitrary positive integers $s$ and $d$ for which $s>d/2$,
has the reproducing kernel
$$
K_{d,s}(x,t)=\int_{\R^d}\frac{\prod_{j=1}^d\cos\left(2\pi\,(x_j-t_j)u_j\right)}
{1+\sum_{0<|\alpha|_1\le s}\prod_{j=1}^d(2\pi\,u_j)^{2\alpha_j}}\,{\rm
  d}u
$$
for all $x,t\in\R^d$,
where $x_j,t_j,u_j,\alpha_j$ are components of $d$-variate $x,t,u,\alpha$, and
$|\alpha|_1=\sum_{j=1}^d\alpha_j$ with non-negative integers
$\alpha_j$.
We obtain a more explicit form for the reproducing kernel $K_{1,s}$
and find a closed form for the kernel $K_{d, \infty}$.

Knowing the form of $K_{d,s}$, we present applications
on the best embedding constants between the Sobolev space $W^s_2(\R^d)$
and $L_\infty(\R^d)$, and on strong polynomial tractability
of integration with an arbitrary probability density.
We prove that the best embedding constants are exponentially small in
$d$, whereas worst case
integration errors of algorithms using $n$ function values
are also exponentially small in $d$ and decay at least
like $n^{-1/2}$.
This yields strong polynomial tractability in the
worst case setting for the absolute error criterion.

\end{abstract}
\noindent{\bf Key words:}\, Reproducing kernels;
Tractability;
Sobolev space.
\\
{\bf Mathematics Subject Classification (2010):}\,
 65Y20,~\,46E22,~\,65D30,~\,68Q25.

\section{Introduction and results}  \label{S1}

One of the most studied spaces in mathematical analysis are
Sobolev spaces $W^s_p(\Omega)$ for a positive integer $s$, $p\in[1,\infty]$
and $\Omega\subseteq \R^d$. In this paper we consider $p=2$ and
$\Omega=\R^d$ for arbitrary integers $s$ and $d$. Then $W^s_2(\R^d)$
is a separable Hilbert space equipped with the inner product
\begin{equation}\label{normsob}
\il f,g\ir_{W^s_2(\R^d)}=\sum_{|\alpha|_1\le s}
\il D^\alpha f,D^\alpha g\ir_{L_2(\R^d)}\ \ \ \
\mbox{for all}\ \ f,g\in W^s_2(\R^d).
\end{equation}
Here, $D^\alpha$ is the differential operator
$$
D^{\,\alpha} f(x)=\frac{\partial^{\,|\alpha|}}{\partial\,x_1^{\alpha_1}\,
\partial\,x_2^{\alpha_2}\,\cdots\,\partial\,x_d^{\alpha_d}}\ f(x)
\ \ \ \ \mbox{for all}\ \ x=(x_1,x_2,\dots,x_d)\in\R^d
$$
and $L_2(\R^d)$ is the standard space of square integrable functions
with the inner product
$$
\il f,g\ir_{L_2(\R^d)}=\int_{\R^d}f(x)\,\overline{g(x)}\ {\rm d}x.
$$
The embedding condition $s>d/2$ implies
that we can treat $W^s_2(\R^d)$ as a space of continuous functions and
function values are continuous linear functionals.
This means that
$W^s_2(\R^d)$ is a reproducing kernel Hilbert space with a
reproducing kernel $K_{d,s}$, i.e.~$W^s_2(\R^d)=H(K_{d,s})$.
This is a function defined on
$\R^d\times \R^d$ such that $K_{d,s}(\square, t)\in W^s_2(\R^d)$ for
all $t\in \R^d$, the matrix $(K_{d,s}(x_k,x_j))_{k,j=1,2,\dots,n}$
is hermitian and semi-positive definite for all choices of $n$ and
$x_j\in\R^d$, and most importantly
$$
f(t)=\il f,K_{d,s}(\square,t)\ir_{W^s_2(\R^d)}\ \ \ \
\mbox{for all}\ \ f\in W^s_2(\R^d)\ \ \mbox{and for all}\ \
t\in \R^d.
$$
Here, $\square$ is used as the placeholder for the variable of a
function  we consider.
Sometimes we use the shorter notation
$\delta_t(x) = K(x,t)$, hence
$$
f(t) = \il f , \delta_t \ir \qquad \mbox{for all} \quad f \in H(K).
$$

The knowledge of the reproducing kernels is very useful in the
analysis of many computational problems.
Examples include multivariate integration and approximation,
scattered data approximation, statistical and
machine learning, the numerical solution of partial differential
equations, see for instance \cite{BTA04,B03,FY11,FY13,NW08,
SW06,Ta96,Wa90,We05}.
We only mention one application of the kernel $K$ to
the best linear estimation (or optimal recovery or Kriging).
The problem is to find $f \in H$ with minimal norm
such that $f(x_i) = y_i$ for $i=1, 2, \dots , n$.
The solution is an abstract spline of the form
$f^* = \sum_{j=1}^n \alpha_j \delta_{x_j}$,
where $\alpha_j$'s are chosen such that $f^*(x_i)=y_i$
for $i=1, 2, \dots , n$.

It is usually enough to analyse
reproducing kernels instead of the corresponding Hilbert spaces.
It is therefore somehow surprising that it is difficult to find in the
literature explicit formulas for the reproducing kernels of the Sobolev
spaces $W^s_2(\R^d)$
except for the univariate case $d=1$ with $s=1$ and $s=2$.
For $s=1$, we have
$$
K_{1,1}(x,t)=\tfrac12\,\exp\left(-|x-t|\right)\ \ \ \
\mbox{for all} \ \ x,t\in\R,
$$
see for example \cite{Ta96}, and for $s=2$ we have
$$
K_{1,2}(x,t) = \frac{\sqrt 3}3 e^{-|x-t|\sqrt 3/2}
\sin\left(\frac{|x-t|}2+\frac\pi 6\right)\ \ \ \
\mbox{for all} \ \ x,t\in\R,
$$
see \cite{FY11}.

We want to add that reproducing kernels of the Sobolev spaces
with an equivalent norm to~\eqref{normsob} or reproducing kernels
of generalized Sobolev spaces can be found in the literature,
see for instance \cite{FY11,FY13,SW06,We05}. We will return to
this point later.

The definition of the Sobolev space $W^s_2 (\R^d)$ makes sense even for
infinite smoothness $s=\infty$, hence we take, in the definition
\eqref{normsob} of the norm, all partial derivatives
of any order.
Observe that this is now a tensor product Sobolev space.
This and more general Sobolev spaces of infinite order were
studied by Dubinskij~\cite{Du86}.

We comment what we mean by explicit formulas of the
kernels $K_{d,s}$. It is well known that reproducing kernels
are related to complete orthonormal basis's of their corresponding
Hilbert spaces. We illustrate this point for the space $W^s_2(\R^d)$.
Let $\{e_k\}_{k=1}^\infty$ be its complete orthonormal basis. Since
$K_{d,s}(\square,t)\in W^s_2(\R^d)$ for all $t\in \R^d$ then
$$
K_{d,s}(\square,t)=\sum_{n=1}^\infty\il
K_{d,s}(\square,t),e_k\ir_{W^s_2(\R^d)}\,e_k
=\sum_{k=1}^\infty \overline{e_k(t)}\,e_k.
$$
Hence,
$$
K_{d,s}(x,t)=\sum_{n=1}^\infty \overline{e_k(t)}\,e_k(x)\ \ \ \
\mbox{for all}\ \ x,t\in \R^d.
$$
We hope that the reader would agree with us that the last formula is
not very explicit
and more explicit formulas of the reproducing kernels
$K_{d,s}$ are indeed needed.

The following theorem is essentially from Hegland and Marti \cite{HM86}
in the case of finite smoothness $s$.
In fact, it is only one sentence on
page 608 in their paper that the kernel is the Fourier
transform of the rational function
given by (7) on page 614
without even giving the formula for the kernel.
Therefore we give a complete presentation here.

\begin{thm}\label{thm1}
The reproducing kernel of $W^s_2(\R^d)$ with $s>d/2$ is
$$
K_{d,s}(x,t)=\int_{\R^d}\frac{\exp\left(2\pi\,\mathrm{i}\,(x-t)\cdot u\right)}
{1+\sum_{0<|\alpha|_1\le s}\prod_{j=1}^d(2\pi\,u_j)^{2\alpha_j}}\,{\rm
  d}u
\ \ \ \ \mbox{for all}\ \ x,t\in\R^d,
$$
where $x_j,t_j,u_j$ are components of $x,t,u\in\R^d$,
$\mathrm{i}=\sqrt{-1}$,
and $(x-t)\cdot u=\sum_{j=1}^d(x_j-t_j)u_j$ is the usual Euclidean
inner product over $\R^d$.

In the case of infinite smoothness $s= \infty$,
we obtain the kernel
$$
K_{d, \infty} (x,t) =
\prod_{j=1}^d \frac{2}{\pi(x_j-t_j)^3}
\left(  \sin (x_j-t_j) - (x_j-t_j) \cos
  (x_j-t_j) \right)
\ \ \ \ \mbox{for all}\ \ x,t\in\R^d.
$$
\qed
\end{thm}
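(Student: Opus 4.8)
The plan is to move everything to the Fourier side, where the inner product \eqref{normsob} diagonalizes. Writing the Fourier transform as $\widehat f(u)=\int_{\R^d}f(x)\,e^{-2\pi\mathrm{i}\,x\cdot u}\,{\rm d}x$, so that $\widehat{D^\alpha f}(u)=\prod_{j=1}^d(2\pi\mathrm{i}\,u_j)^{\alpha_j}\,\widehat f(u)$, Plancherel's theorem turns each summand $\il D^\alpha f,D^\alpha g\ir_{L_2(\R^d)}$ into $\int_{\R^d}\prod_{j=1}^d(2\pi u_j)^{2\alpha_j}\,\widehat f(u)\,\overline{\widehat g(u)}\,{\rm d}u$. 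Summing over $|\alpha|_1\le s$ gives
$$
\il f,g\ir_{W^s_2(\R^d)}=\int_{\R^d} w_s(u)\,\widehat f(u)\,\overline{\widehat g(u)}\,{\rm d}u,
\qquad
w_s(u)=1+\sum_{0<|\alpha|_1\le s}\prod_{j=1}^d(2\pi u_j)^{2\alpha_j}.
$$
I would first make this identity rigorous on the dense subspace of Schwartz functions and then extend by density. This is where the hypothesis $s>d/2$ enters: it guarantees $1/w_s\in L_1(\R^d)$, since $w_s(u)\ge 1+(2\pi)^{2s}\max_j|u_j|^{2s}\ge 1+c\,|u|^{2s}$ with some $c>0$, so $1/w_s(u)\le 1/(1+c|u|^{2s})$ is integrable whenever $2s>d$.

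Next I would produce the kernel by imposing the reproducing property. Fourier inversion gives $f(t)=\int_{\R^d}\widehat f(u)\,e^{2\pi\mathrm{i}\,t\cdot u}\,{\rm d}u$, and comparing this with $\il f,K_{d,s}(\square,t)\ir_{W^s_2(\R^d)}=\int_{\R^d} w_s(u)\,\widehat f(u)\,\overline{\widehat{K_{d,s}(\square,t)}(u)}\,{\rm d}u$ for every $f$ forces $\widehat{K_{d,s}(\square,t)}(u)=e^{-2\pi\mathrm{i}\,t\cdot u}/w_s(u)$, because $w_s$ is real and positive. Applying Fourier inversion once more yields the claimed $K_{d,s}(x,t)=\int_{\R^d}e^{2\pi\mathrm{i}(x-t)\cdot u}/w_s(u)\,{\rm d}u$. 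Since $1/w_s\in L_1$, this integral converges absolutely, $K_{d,s}(\square,t)\in W^s_2(\R^d)$, and the computation above confirms the reproducing identity; I would also note that $1/w_s$ is even in each $u_j$, so the exponential may be replaced by the product of cosines recorded in the abstract.

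For the infinite-smoothness case I would take $s=\infty$ in $w_s$, so that $\alpha$ runs over all of $\N_0^d$ and the weight factorizes into geometric series,
$$
w_\infty(u)=\sum_{\alpha\in\N_0^d}\prod_{j=1}^d(2\pi u_j)^{2\alpha_j}
=\prod_{j=1}^d\sum_{k=0}^\infty\bigl((2\pi u_j)^2\bigr)^{k}
=\prod_{j=1}^d\frac{1}{1-(2\pi u_j)^2}.
$$
The decisive observation is that this series converges only on the open cube $\{\,|u_j|<1/(2\pi)\ \text{for all}\ j\,\}$ and diverges otherwise, so $1/w_\infty(u)=\prod_{j=1}^d\bigl(1-(2\pi u_j)^2\bigr)$ inside the cube and $1/w_\infty(u)=0$ outside. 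The kernel integral is then taken over a bounded set with a bounded, fully factorized integrand, and Fubini splits it into a product of one-dimensional integrals $\frac{1}{2\pi}\int_{-1}^1 e^{\mathrm{i}\,(x_j-t_j)v}\,(1-v^2)\,{\rm d}v$ after the substitution $v=2\pi u_j$. Evaluating this elementary integral (for instance by differentiating $\int_{-1}^1 e^{\mathrm{i}yv}\,{\rm d}v=2\sin y/y$ twice in $y$) gives $\frac{2}{\pi(x_j-t_j)^3}\bigl(\sin(x_j-t_j)-(x_j-t_j)\cos(x_j-t_j)\bigr)$, and the product over $j$ is the stated closed form for $K_{d,\infty}$.

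The main obstacle is not the formal computation but the analytic justification of the Fourier-side identity and the exchange of limits. Concretely, I expect the delicate points to be verifying that \eqref{normsob} and the weighted $L_2$ norm with weight $w_s$ define the same space (so that Plancherel applies and the density argument is legitimate), and, in the case $s=\infty$, carefully handling the passage from the formal infinite sum to the pointwise-divergent weight $w_\infty$, which is exactly what collapses the domain of integration to the cube. Once $1/w_s\in L_1$ is in hand, the remaining assertions—absolute convergence of the defining integral, the membership $K_{d,s}(\square,t)\in W^s_2(\R^d)$, and the reproducing property itself—follow directly from the identities above.
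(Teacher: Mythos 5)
Your proposal is correct and follows essentially the same route as the paper: the Fourier-side diagonalization of the Sobolev inner product (the paper's Lemma~1, with your $w_s=v_{d,s}^2$), identification of $\F\,[K_{d,s}(\square,t)]$ by comparing the reproducing property against Fourier inversion, and, for $s=\infty$, the collapse of the integration domain to the cube $\{\,|u_j|\le 1/(2\pi)\,\}$ where the geometric series converges. The only cosmetic difference is that you evaluate the final one-dimensional integral by differentiating $2\sin y/y$ twice in $y$, whereas the paper uses integration by parts; both yield the same closed form.
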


We obtain these formulas by using the Fourier transform and a few
of its standard properties. In particular, we find a formula which
relates the inner products of $W^s_2(\R^d)$ and $L_2(\R^d)$.
This relation allows us to find a complete orthonormal basis
of $W^s_2(\R^d)$ in terms of a complete orthonormal basis of
$L_2(\R^d)$.

\medskip

We now comment on the form of $K_{d,s}$. Obviously, $K_{d,s}$ takes
real values since we can replace
$$
\exp\left(2\pi\,\mathrm{i}\,(x-t)\cdot u\right)=
\cos\left(2\pi(x-t)\cdot u\right)\ +\
\mathrm{i}\,\sin\left(2\pi(x-t)\cdot u\right)
$$
and the integral of the imaginary part $\sin(2\pi(x-t)\cdot
u)$ is zero since the corresponding integrand is odd with respect to $u$.
We can do even more. Namely,
\begin{eqnarray*}
e^{2\pi\,\mathrm{i}\,(x-t)\cdot u}&=&\prod_{j=1}^d
e^{2\pi\,\mathrm{i}\,(x_j-t_j)u_j}=
\prod_{j=1}^d\left(\cos\left(2\pi(x_j-t_j)u_j\right)\,+\,\mathrm{i}\,
\sin\left(2\pi(x_j-y_j)u_j\right)\right)\\
&=&\sum_{(\beta_1,\beta_2,\cdots,\beta_d)\in\{0,1\}^d}
\prod_{j=1}^d
\left[\cos\left(2\pi(x_j-y_j)u_j\right)\right]^{\beta_j}\,
\left[\mathrm{i}\,\sin\left(2\pi(x_j-y_j)u_j\right)\right]^{1-\beta_j}
\end{eqnarray*}
and all terms with $\beta_j=0$ for some $j$
will disappear after integration as an odd function of~$u_j$.
Therefore, we can rewrite $K_{d,s}$ for all $x,t\in\R^d$ as
\begin{equation}\label{equiker}
K_{d,s}(x,t)
=\int_{\R^d}\frac{\cos\left(2\pi\,(x-t)\cdot u\right)}
{1+\sum_{0<|\alpha|_1\le s}\prod_{j=1}^d(2\pi\,u_j)^{2\alpha_j}}\,{\rm
  d}u
=\int_{\R^d}\frac{\prod_{j=1}^d\cos\left(2\pi\,(x_j-t_j)u_j\right)}
{1+\sum_{0<|\alpha|_1\le s}\prod_{j=1}^d(2\pi\,u_j)^{2\alpha_j}}\,{\rm
  d}u .
\end{equation}

Clearly,
$K_{d,s}(x,x)$ is independent of $x$ and
$$
K_{d,s}(x,x)=
\int_{\R^d}\frac1{1+\sum_{0<|\alpha|_1\le s}
\prod_{j=1}^d(2\pi\,u_j)^{2\alpha_j}}\,{\rm d}u.
$$
Note that $K_{d,s}(x,x)<\infty$ iff $s>d/2$.
This shows the importance of the embedding condition for the
existence of the reproducing kernel.

\medskip

Obviously, it would be useful to find an even more explicit form of
$K_{d,s}$ than that presented in Theorem \ref{thm1}. Ideally, we would
like to find a closed form for the integral defining $K_{d,s}$. We
succeeded with this problem only for $d=1$. In this case, the integral
over $\R$ can be explicitly computed by the residual method and
for all $x,t\in \R$ we obtain
\begin{equation}\label{expkernel}
K_{1,s}(x,t)=-
\sum_{j=1}^s\frac{e^{-|x-t|\,\sin(j\pi/(s+1))}}{s+1}\,
\sin\left(\frac{j\pi}{s+1}\right)\,\cos\left(|x-t|\,\cos\left(\frac{j\pi}{s+1}
\right)\,+\,
\frac{2j\pi}{s+1}\right).
\end{equation}
It is interesting that, for
fixed $x$ and $t\to\infty$, the function $K_{1,s}(x,t)$
decays exponentially for all
$s<\infty$ but only polynomially for $s=\infty$,
see Theorem~\ref{thm1}.
The proofs of all these formulas are provided in Section 2.

In Section~3 we present two applications
based on the form of the reproducing kernel.
The first application is on the best embedding constants between the
Sobolev spaces $W^s_2(\R^d)$ with $s>d/2$ and $L_\infty(\R^d)$.
It is easy to show that the best embedding constant is $K_{d,s}(0,0)^{1/2}$
and it is exponentially small in $d$.

The second application is on integration problems
$$
S_{\rho_d} (f) = \int_{\R^d} f(x) \rho_d (x) \, {\rm d} x
$$
for $f \in W^s_2(\R^d)$ and a probability density $\rho_d : \R^d \to \R^+_0$,
where
$s > d/2$.
We prove that worst case
integration errors of some algorithms that use $n$ function values
is exponentially small in $d$ and decay at least as $n^{-1/2}$. This implies
strong polynomial tractability of integration
for the absolute error criterion.
In addition, we also consider strong polynomial tractability of
integration for tensor product Sobolev spaces.

The final Section~\ref{S4} of this paper
contains concluding remarks on
how the results can be generalized to  weighted Sobolev spaces,
Sobolev spaces with equivalent norms, as well as more general
reproducing kernels Hilbert spaces.

\section{Proofs}  \label{S2}

We will be using standard properties of the Fourier transform which
can be found, for example, in \cite{SW71}.
For integrable functions $f$ over $\R^d$, the Fourier transform is
defined as
$$
[\F f](z)=\int_{\R^d}f(u)\,e^{-2\pi\,\mathrm{i}\,z\cdot u}\,{\rm d}u
\ \ \ \ \mbox{for all}\ \ z\in\R^d,
$$
where, as before, $z\cdot u=\sum_{j=1}^dz_ju_j$ for components $z_j,u_j$
of $z$ and $u$.

It is well known that for $f,g\in L_1(\R^d)\cap L_2(\R^d)$ we have
$$
\il f,g\ir_{L_2(\R^d)}=\il \F f,\F g\ir_{L_2(\R^d)}.
$$
Since $L_1(\R^d)\cap L_2(\R^d)$ is a dense subset of $L_2(\R^d)$
   there is a unique extension of $\F$ to $L_2(\R^d)$.
For simplicity we denote this extension also by $\F$. The mapping $\F$
is an isometry and
$$
[\F^{-1} f](z)=[\F f](-z)\ \ \ \
\mbox{for all} \ \ f\in L_2(\R^d)\ \ \mbox{and}\ \ z\in \R^d.
$$
For $f\in W^s_2(\R^d)$, we have $D^\alpha f\in L_2(\R^d)$ for all
$|\alpha|\le s$. It is known that
$$
[\F (D^\alpha f)] (z) =
\left(\prod_{j=1}^d(2\pi\,\mathrm{i}\,z_j)^{\alpha_j}\right)\
[\F f](z)\ \ \ \ \mbox{for all} \ \ z\in \R^d.
$$
Let
\begin{equation}\label{vfun}
v_{d,s}(z)\,=\,\left(1+\sum_{0<|\alpha|_1\le s}\prod_{j=1}^d(2\pi
  z_j)^{2\alpha_j}\right)^{1/2}\ \ \ \ \mbox{for all} \ \ z\in\R^d.
\end{equation}
Clearly, $v_{d,s}\ge1$ and it is easy to verify that $s>d/2$ implies
$v_{d,s}^{-1}\in L_2(\R^d)$.
We are ready to prove the following lemma which relates the inner
products of $W^s_2(\R^d)$ and $L_2(\R^d)$.
\begin{lem}\label{lem1}
$\qquad$
\begin{eqnarray*}
f\in W^s_2(\R^d)\ \ \ \ &\mbox{iff}&\ \ \ v_{d,s}\,\F f\in L_2(\R^d),\\
\il f,g\ir_{W^s_2(\R^d)} &=& \il v_{d,s}\,\F f,v_{d,s}\,\F g\ir_{L_2(\R^d)}
\ \ \ \ \mbox{for all}\ \ f,g\in W^s_2(\R^d).
\end{eqnarray*}
\end{lem}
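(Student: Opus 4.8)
The plan is to reduce both assertions to the two facts recalled just before the statement: Plancherel's identity $\langle f,g\rangle_{L_2(\R^d)}=\langle\F f,\F g\rangle_{L_2(\R^d)}$ and the differentiation rule $[\F(D^\alpha f)](z)=\big(\prod_{j=1}^d(2\pi\,\mathrm{i}\,z_j)^{\alpha_j}\big)[\F f](z)$. I would first establish the inner-product identity for $f,g\in W^s_2(\R^d)$, and then read off the characterization as a by-product. Starting from the definition \eqref{normsob}, applying Plancherel to each summand and then the differentiation rule gives
$$
\langle D^\alpha f,D^\alpha g\rangle_{L_2(\R^d)}
=\int_{\R^d}\Big|\prod_{j=1}^d(2\pi\,\mathrm{i}\,z_j)^{\alpha_j}\Big|^2\,
\F f(z)\,\overline{\F g(z)}\,\dint z
=\int_{\R^d}\prod_{j=1}^d(2\pi z_j)^{2\alpha_j}\,\F f(z)\,\overline{\F g(z)}\,\dint z,
$$
where I used that $\big|\prod_{j=1}^d(2\pi\,\mathrm{i}\,z_j)^{\alpha_j}\big|^2=\prod_{j=1}^d(2\pi z_j)^{2\alpha_j}$ because every exponent $2\alpha_j$ is even. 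Since $\{\alpha:|\alpha|_1\le s\}$ is a finite set, I may interchange the summation with the integral and use the purely algebraic identity
$$
\sum_{|\alpha|_1\le s}\prod_{j=1}^d(2\pi z_j)^{2\alpha_j}
=1+\sum_{0<|\alpha|_1\le s}\prod_{j=1}^d(2\pi z_j)^{2\alpha_j}
=v_{d,s}(z)^2,
$$
which is exactly the definition \eqref{vfun}. Summing the displayed equalities over $|\alpha|_1\le s$ then yields $\langle f,g\rangle_{W^s_2(\R^d)}=\langle v_{d,s}\,\F f,v_{d,s}\,\F g\rangle_{L_2(\R^d)}$, the second claim of the lemma.

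For the characterization I would argue on the Fourier side. The forward implication is immediate: if $f\in W^s_2(\R^d)$, taking $g=f$ above gives $\norm{v_{d,s}\,\F f}_{L_2(\R^d)}^2=\norm{f}_{W^s_2(\R^d)}^2<\infty$, so $v_{d,s}\,\F f\in L_2(\R^d)$. For the converse, assume $v_{d,s}\,\F f\in L_2(\R^d)$. Because $v_{d,s}\ge1$ this forces $\F f\in L_2(\R^d)$, hence $f\in L_2(\R^d)$. Moreover, for each fixed $\alpha$ with $|\alpha|_1\le s$ a single summand is dominated by the whole sum, so $\big|\prod_{j=1}^d(2\pi z_j)^{\alpha_j}\big|\le v_{d,s}(z)$ pointwise, whence $\prod_{j=1}^d(2\pi\,\mathrm{i}\,z_j)^{\alpha_j}\,\F f\in L_2(\R^d)$. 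Its inverse Fourier transform is then a well-defined element of $L_2(\R^d)$, which I claim equals the weak derivative $D^\alpha f$; granting this, $D^\alpha f\in L_2(\R^d)$ for all $|\alpha|_1\le s$ and therefore $f\in W^s_2(\R^d)$.

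The only step that is not purely formal is this last identification in the converse direction: the differentiation rule was stated for $f\in W^s_2(\R^d)$, so I cannot apply it directly to conclude that $\F^{-1}\big[\prod_j(2\pi\,\mathrm{i}\,z_j)^{\alpha_j}\F f\big]$ is the weak derivative of a function not yet known to be differentiable. I expect this to be the main, though entirely standard, obstacle. It is handled by transferring the derivatives onto a test function: for every $\varphi$ in the Schwartz class one tests against $D^\alpha\varphi$, applies Plancherel together with the differentiation rule on the smooth $\varphi$, and uses the density of the Schwartz class in $L_2(\R^d)$ to conclude that the constructed $L_2$ function indeed satisfies the defining relation of the weak derivative $D^\alpha f$.
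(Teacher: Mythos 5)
Your proof is correct and takes essentially the same route as the paper: Plancherel's identity plus the differentiation rule for the Fourier transform, summed over $|\alpha|_1\le s$, to identify the $W^s_2(\R^d)$ inner product with the $v_{d,s}$-weighted $L_2(\R^d)$ inner product, with the characterization read off by setting $g=f$. If anything, you are more careful than the paper on the converse implication: where the paper simply asserts that the reasoning ``can be reversed,'' you correctly flag that the differentiation rule cannot be applied to an $f$ not yet known to lie in $W^s_2(\R^d)$, and you close that gap by the standard weak-derivative identification against Schwartz test functions.
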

\begin{proof}
For $f\in W^s_2(\R^d)$ we have
\begin{eqnarray*}
\|f\|^2_{W^s_2(\R^d)}&=&
\sum_{|\alpha|_1\le s}\|D^\alpha f\|^2_{L_2(\R^d)}=
\sum_{|\alpha|_1\le s}\|\,[\F(D^\alpha)] f\, \|^2_{L_2(\R^d)}\\
&=&
\int_{\R^d}\sum_{|\alpha|_1\le s}\prod_{j=1}^d(2\pi z_j)^{2\alpha_j}\,
|[\F f](z)|^2\,{\rm d}z=
\int_{\R^d} v_{d,s}(z)^2\,|[\F f](z)|^2\,{\rm d}z\\
&=&
\|v_{d,s}\,[\F f]\|^2_{L_2(\R^d)}.
\end{eqnarray*}
This means that $f\in W^s_2(\R^d)$ implies that $v_{d,s}\,\F f\in
L_2(\R^d)$.
Of course, if $v_{d,s}\,\F f\in L_2(\R^d)$ then we can reverse our
reasoning and claim that $f\in W^s_2(\R^d)$. This proves the first
part of Lemma \ref{lem1}.

For $f,g\in W^s_2(\R^d)$ we have
\begin{eqnarray*}
\il f,g\ir_{W^s_2(\R^d)}&=&
\sum_{|\alpha|_1\le s} \il D^\alpha f,D^\alpha g \ir_{L_2(\R^d)}=
\sum_{|\alpha|_1\le s} \il\,[\F(D^\alpha)] f,[\F(D^\alpha)] g \ir_{L_2(\R^d)}\\
&=&
\int_{\R^d}\sum_{|\alpha|_1\le s}\left(\prod_{j=1}^d(2\pi z_j)^{\alpha_j}\right)\,
[\F f](z)\,\left(\prod_{j=1}^d(2\pi z_j)^{\alpha_j}\right)\,
\overline{[\F g](z)}\,{\rm d}z\\
&=&
\int_{\R^d} v_{d,s}(z)^2\,[\F f](z)\,
\overline{[\F g](z)}\,{\rm d}z\\
&=&\il v_{d,s}\,\F f,v_{d,s}\,\F g\ir_{L_2(\R^d)},
\end{eqnarray*}
as claimed in the second part of Lemma \ref{lem1}.\\
\end{proof}

{}From Lemma \ref{lem1} it is easy to find a complete orthonormal
basis of $W^s_2(\R^d)$ in terms of a complete orthonormal basis
$\{ e_k\}_{k=1}^\infty$ of the space $L_2(\R^d)$. Indeed, let
$$
f_k=\F^{-1} \left(v_{d,s}^{-1}e_k\right)\ \ \ \mbox{for all}\ \ k \in \N .
$$
Then $f_k\in L_2(\R^d)$ and $e_k=v_{d,s}\,\F f_k \in L_2(\R^d)$.
Due to the first point of
Lemma \ref{lem1} we also have that $f_k\in W^s_2(\R^d)$. Clearly,
due to the second point of Lemma \ref{lem1}  we have
$$
\il f_k,f_j\ir_{W^s_2(\R^d)}= \il v_{d,s}\,\,\F f_k,
v_{d,s}\,\F f_j\ir_{L_2(\R^d)}=\il e_k,e_j\ir_{L_2(\R^d)}=\delta_{k,j}.
$$
Hence $\{f_k\}_{k=1}^\infty$ is orthonormal in $W^s_2(\R^d)$.

To show that the $\{f_k\}_{k=1}^\infty$ is complete, take an arbitrary
$f\in W^s_2(\R^d)$. Then $v_{d,s}\,\F f\in L_2(\R^d)$ and
\begin{eqnarray*}
v_{d,s}\,\F f&=&\sum_{k=1}^\infty\il v_{d,s}\,\F f,e_k\ir_{L_2(\R^d)}\,e_k=
\sum_{k=1}^\infty\il
f,\F^{-1}(v_{d,s}^{-1}\,e_k)\ir_{W^s_2(\R^d)}\,e_k\\
&=&
\sum_{k=1}^\infty\il f,f_k\ir_{W^s_2(\R^d)}\,e_k.
\end{eqnarray*}
Hence
$$
f=\sum_{k=1}^\infty\il
f,f_k\ir_{W^s_2(\R^d)}\,\F^{-1}(v_{d,s}^{-1}\,e_k)=
\sum_{k=1}^\infty\il f,f_k\ir_{W^s_2(\R^d)}\,f_k,
$$
as claimed.

\medskip

Due to $s>d/2$ we know that $W^s_2(\R^d)$ is a reproducing kernel
Hilbert space and its reproducing kernel is denoted by $K_{d,s}$. We
need to show that $K_{d,s}$ satisfies the formula of
Theorem~\ref{thm1}. Since  $K_{d,s}(\square,t)\in W^s_2(\R^d)$,
Lemma~\ref{lem1} yields for all $f\in W^s_2(\R^d)$
\begin{equation}\label{111}
f(t)=\il f, K_{d,s}(\square,t)\ir_{W^s_2(\R^d)} =
\il v_{d,s}\,\F f,v_{d,s}\,\F [K_{d,s}(\square,t)]\ir_{L_2(\R^d)}.
\end{equation}
On the other hand,
\begin{equation}\label{112}
\begin{split}
f(t)&=[\F^{-1}\F f](t)=\int_{\R^d}e^{2\pi\,\mathrm{i}\,t\cdot u}
[\F f](u)\,{\rm d}u \\
&= \int_{\R^d}
v_{d,s}(u)\,[\F f](u)\,v_{d,s}(u)\,\frac{\exp(2\pi\,\mathrm{i}\,t\cdot
  u)}{v_{d,s}^2(u)}\,{\rm d}u\\
&=\il v_{d,s}\,\F f, v_{d,s}\,
\frac{\exp(-2\pi\,\mathrm{i}\,t\cdot\square)}
{v_{d,s}^2}\ir_{L_2(\R^d)}.
\end{split}
\end{equation}
%
{}From \eqref{111} and \eqref{112} we conclude
$$
\F [K_{d,s}(\square,t)](u)= \frac{\exp(-2\pi\,\mathrm{i}\,t\cdot u)}
{v_{d,s}^2}
$$
or equivalently
$$
K_{d,s}(x,t)= \F^{-1}\left[ \frac{\exp(-2\pi\,\mathrm{i}\,t\cdot\square)}
{v_{d,s}^2} \right](x)
$$
almost everywhere.

Since we are dealing with continuous functions, the last relation must
hold for all arguments, i.e.,
$$
K_{d,s}(x,t)= \int_{\R^d}\frac{\exp\left(2\pi\,\mathrm{i}\,(x-t)\cdot u\right)}
{v_{d,s}^2(u)}\,{\rm d}u,
$$
as claimed. This completes the proof of Theorem~\ref{thm1}
for finite $s$.

We turn to the case $s=\infty$.
Again we obtain
$$
K_{d, \infty }(x,t)=
\int_{\R^d}\frac{\prod_{j=1}^d\cos\left(2\pi\,(x_j-t_j)u_j\right)}
{\sum_{|\alpha|_1 < \infty }\prod_{j=1}^d(2\pi\,u_j)^{2\alpha_j}}\,{\rm
  d}u .
$$
Now
$$
\sum_{|\alpha |_1 < \infty} \prod_{j=1}^d  (2 \pi u_j )^{2 \alpha_j} =
\prod_{j=1}^d \left( \sum_{\alpha =0}^\infty (2 \pi u_j )^{2\alpha} \right) .
$$
For $2\pi |u_j| \ge 1$ the last product
is not finite and therefore we need to integrate
only over $[-1/(2\pi), 1/(2\pi) ]$, and we obtain
$$
K_{d, \infty }(x,t)=
\prod_{j=1}^d  \int_{-1/(2\pi)}^{1/(2\pi)}
 (1-4 \pi^2 u^2 ) \cos (2\pi(x_j-t_j) u) \,
{\rm d} u.
$$
Integration by parts yields
\begin{equation}\label{eq:Kinf}
K_{d, \infty} (x,t) =  \prod_{j=1}^d \frac{2}{\pi(x_j-t_j)^3}
\left(  \sin (x_j-t_j) - (x_j-t_j) \cos
  (x_j-t_j) \right).
\end{equation}
Let\footnote{We propose to call the function $\wt K_\infty$
the \emph{Varenna} function since it was found during the discrepancy
workshop in Varenna, Italy, in June 2016.}
$$
\wt K_\infty(x) :=  K_{1, \infty} (x, 0) =
\prod_{j=1}^d \frac{2}{\pi x^3}
\left(  \sin x  - x \cos
x \right)
\ \ \ \mbox{for all}\ \ \ x\in\R,
$$
which
is possibly the ``simplest'' function
in the space $W^\infty_2 (\R)$, in particular,
this is a $C^\infty$ function with small derivatives, see Figure~ \ref{fig1}.
Using the series representation of $\sin$ and $\cos$ we obtain
\begin{eqnarray*}
\wt K_\infty(x)&=&\frac2{\pi}\
\sum_{j=0}^\infty\frac{(-1)^j}{(2j+1)!(2j+3)}
  \,x^{2j}\\
&=&\frac{2}{3\pi}
\left(1-\frac{x^2}{10}+\frac{x^4}{280}-\dots\right).
\end{eqnarray*}
\begin{figure}
\vspace{-1cm}
\newrgbcolor{qqwuqq}{0. 0.39215686274509803 0.}
\begin{center}
\psset{xunit=0.4cm,yunit=18.0cm,algebraic=true,dimen=middle,dotstyle=o,dotsize=5pt 0,linewidth=0.8pt,arrowsize=3pt 2,arrowinset=0.25}
\begin{pspicture*}(-17,-0.06)(17,0.26)
\psaxes[labelFontSize=\scriptstyle,xAxis=true,yAxis=true,Dx=5.,Dy=0.05,ticksize=-2pt 0,subticks=2]{-}(0,0)(-15,-0.06)(15,0.3)
\psplot[linewidth=1.4pt,linecolor=qqwuqq,plotpoints=200]
{-15}{15}{2.0/(3.14*(x)^(3.0))*(SIN(x)-x*COS(x))}
\end{pspicture*}
\end{center}
\vspace{-5mm}
\caption{The function $\wt K_\infty$}\label{fig1}
\end{figure}
The kernel $K_{d,\infty}$ is generated by the function $\wt K_\infty$
since
$$
K_{d,\infty}(x,t)=\prod_{j=1}^d\wt K_\infty(x_j-t_j)\ \ \
\mbox{for all}\ \ \ x,t\in \R^d.
$$
In particular, we obtain
\begin{equation}\label{eq:Kinf00}
K_{d, \infty }(x,x)=
\left(  \frac{2}{3 \pi} \right)^d = (0.2122\dots)^{\,d}.
\end{equation}

\medskip

We still need to prove \eqref{expkernel}. Assume now that $d=1$.
Then
\[
K_{1,s}(x,t) \,=\,
\int_\R \frac{\eu^{2\pi\,\mathrm{i}\, (x-t)u}}
{1+\sum_{\ell=1}^s(2\pi u)^{2\ell}}\,\dint u=
\int_\R \frac{\eu^{2\pi\,\mathrm{i}\, |x-t|u}}
{1+\sum_{\ell=1}^s(2\pi u)^{2\ell}}\,\dint u.
\]
We did not find an explicit formula for the kernel
$K_{1,s}$ in the literature except for $s=1$ and $s=2$.
In any case the derivation of the kernel
is similar as in \cite{Ta96} for $s=1$.

To compute the integral that appears in the formula for $K_{1,s}$
we use the residual method. Let $\xi=|x-t|$.
Then the integrand is
\[
f(u):=\frac{\exp(2\pi i\xi u)}{1+\sum\limits_{\ell=1}^s (2\pi u)^{2\ell}} =
\frac{\exp(2\pi i\xi u) \big((2\pi u)^2-1\big)}{(2\pi u)^{2s+2}-1}\;\;\;\;
\text{for}\;\; (2\pi u)^2\neq 1.
\]
The poles of $f$ in the upper half plane are
\[
u_j=\frac1{2\pi}\exp\Big(\mathrm{i}\,
\frac{j\pi}{s+1}\Big)\;\;\;{\rm for}\;\;j = 1,2,\ldots,s.
\]
Note that $u_0=\frac1{2\pi}$ and $u_{s+1}=-\frac1{2\pi}$
 on the real line are not poles.
The integral is then equal to the product of $2\pi\, \mathrm{i}$
by the sum
of the residues of the
integrand at the poles. We have
\[
\begin{split}
{\rm Res}\,f(u_j)&\ =\ \lim_{u\rightarrow u_j}\,(u-u_j)f(u)\,
=\,\lim_{u\rightarrow
  u_j}\,(u-u_j)\frac{\exp(2\pi\,\mathrm{i}\,\xi u)
\big((2\pi u)^2-1\big)}{(2\pi u)^{2s+2}-1}\\
&\ =\ \lim_{u\rightarrow
  u_j}\,(u-u_j)\frac{\exp(2\pi\,\mathrm{i}\,\xi u)
\big((2\pi u)^2-1\big)}{\big((2\pi u)^{2s+2}-1\big)-\big((2\pi
u_j)^{2s+2}-1\big)}\\
&\ =\ \lim_{u\rightarrow u_j}\,\frac{\big((2\pi u)^2-1\big)
\exp(2\pi\,\mathrm{i}\xi u)}
{\frac{(2\pi u)^{2s+2}-(2\pi u_j)^{2s+2}}{u-u_j}}
=\ \frac{\big((2\pi u_j)^2-1\big)
\exp(2\pi\,\mathrm{i}\,\xi u_j)}{2\pi(2s+2)(2\pi u_j)^{2s+1}}.
\end{split}
\]
This yields
\begin{equation}  \label{kernels}
K_{1,s}(x,t)=\frac{\mathrm{i}}{2s+2}\sum_{j=1}^s
\frac{\exp\left(|x-t|\exp(\mathrm{i}\,\frac{j\pi}{s+1}+\mathrm{i}\,
\frac\pi2)\right)}{\exp\left(\mathrm{i}\,j\pi
\frac{2s+1}{s+1}\right)}\left(\exp\Big(\mathrm{i}\,\frac{2j\pi}{s+1}\Big)-1\right).
\end{equation}

The kernel is real valued and we may write  $\wt K_s(x-t)=K_{1,s}(x,t)$ as
\begin{equation}\label{kernel_triang}
\begin{split}
\wt K_s(t)  & = -\frac{1}{2s+2}\sum_{j=1}^s e^{-|t|\sin\left(\frac{j\pi}{s+1}\right)}
\Biggl(
 \sin \left( |t|\cos\Big(\frac{j\pi}{s+1}\Big) + \frac{3j\pi}{s+1} \right) \\
 &\hspace{6cm}- \sin \left( |t|\cos\Big(\frac{j\pi}{s+1}\Big) +  \frac{j\pi}{s+1} \right)
\Biggr)\\
& = - \frac{1}{s+1}\sum\limits_{j=1}^s e^{-|t|\sin\left(\frac{j\pi}{s+1}\right)}
\sin \left(\frac{j\pi}{s+1}\right) \,
\cos \left( |t|\cos\Big(\frac{j\pi}{s+1}\Big) + \frac{2 j\pi}{s+1} \right).
\end{split}
\end{equation}
This proves \eqref{expkernel}, and completes the proofs of all results
mentioned in the previous section.

\vskip 1pc
We illustrate $K_{1,s}$ for $s=1,2,3,4$. We have
\[
\begin{split}
\wt K_1(t) & = \frac12 e^{-|t|},
\\
\wt K_2(t) & = \frac{\sqrt 3}3 e^{-|t|\sqrt 3/2}\sin\Big(\frac{|t|}2+\frac\pi 6\Big),
\\
\wt K_3(t) & = \frac14 \Big( e^{-|t|}
+ \sqrt 2 e^{-|t|/{\sqrt 2}}\sin \frac{|t|}{\sqrt 2} \Big),\\
\wt K_4(t) & = -\frac25 \left( e^{-|t|\sin\frac{\pi}{5}}
\cos \Big( |t|\cos\frac{\pi}{5} +  \frac{2\pi}5 \Big)
\sin \frac{\pi}5 + e^{-|t|\sin\frac{2\pi}5}
\cos \Big( |t|\cos\frac{2\pi}5 + \frac{4\pi}5 \Big)
\sin \frac{2\pi}5 \right).
\end{split}
\]
The function $\wt K_1$ is positive on $\R$, while
the functions  $\wt K_2, \wt K_3$ and $\wt K_4$ also take negative values.

\begin{rem}
The above formulas, see~\eqref{kernel_triang}, show that the functions
$\wt K_s$ decay exponentially fast for finite $s$, while for $s=\infty$ we only
have quadratic decay.
\end{rem}

Clearly, and as one can see from the formula in Theorem~\ref{thm1},
the value of $K_{1,s}(0,0)=\wt K_s(0)$ is monotonically decreasing with $s$.
Using the explicit formula for $\wt K_1$ from above together with \eqref{eq:Kinf00},
we obtain the following lemma.

\begin{lem}\label{lem:K00}
Let $K_{d,s}$ be the reproducing kernel from Theorem~\ref{thm1}. Then,
for $d=1$ and $s\in\N$, we have
\[
\frac{2}{3\pi}\,=\, K_{1,\infty}(0,0) \,\le\, K_{1,s}(0,0) = \frac 1{s+1}
\frac{\cos \frac{\pi}{2s+2}}{\sin \frac{3\pi}{2s+2}} \,\le\, K_{1,1}(0,0)=
\frac12.
\]
\end{lem}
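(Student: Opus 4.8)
The statement bundles three claims: the endpoint value $K_{1,\infty}(0,0)=2/(3\pi)$, the closed form for $K_{1,s}(0,0)$, and the two inequalities. The first is immediate from \eqref{eq:Kinf00} with $d=1$ (equivalently, $\wt K_\infty(0)=2/(3\pi)$ from the series expansion). For the closed form, the plan is to evaluate the already-proved formula \eqref{expkernel} at $|x-t|=0$; it is slightly cleaner to start from the first line of \eqref{kernel_triang}, where the product-to-sum step has already been carried out. At zero argument every exponential factor equals $1$ and every cosine of an $|x-t|\cos(\cdot)$ term equals $1$, leaving
\[
K_{1,s}(0,0)=\frac{1}{2(s+1)}\sum_{j=1}^s\left(\sin\frac{j\pi}{s+1}-\sin\frac{3j\pi}{s+1}\right).
\]
Thus the whole problem reduces to evaluating the finite sums $\sum_{j=1}^s\sin\frac{mj\pi}{s+1}$ for $m=1$ and $m=3$.

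Each of these is a Dirichlet-type sum. Using $\sum_{j=0}^{s}\sin(j\theta)=\sin\frac{(s+1)\theta}{2}\,\sin\frac{s\theta}{2}\big/\sin\frac{\theta}{2}$ with $\theta=\frac{m\pi}{s+1}$ gives $\sum_{j=1}^s\sin\frac{mj\pi}{s+1}=\sin\frac{m\pi}{2}\,\sin\frac{sm\pi}{2(s+1)}\big/\sin\frac{m\pi}{2(s+1)}$. For $m=1$ I would use $\sin\frac{\pi}{2}=1$ together with $\sin\frac{s\pi}{2(s+1)}=\cos\frac{\pi}{2(s+1)}$ to obtain $\cot\frac{\pi}{2(s+1)}$; for $m=3$ I would use $\sin\frac{3\pi}{2}=-1$ together with $\sin\frac{3s\pi}{2(s+1)}=-\cos\frac{3\pi}{2(s+1)}$ to obtain $\cot\frac{3\pi}{2(s+1)}$. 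Hence $K_{1,s}(0,0)=\frac{1}{2(s+1)}\big(\cot\frac{\pi}{2(s+1)}-\cot\frac{3\pi}{2(s+1)}\big)$, and the identity $\cot\phi-\cot 3\phi=\sin 2\phi/(\sin\phi\,\sin 3\phi)=2\cos\phi/\sin 3\phi$, taken at $\phi=\frac{\pi}{2s+2}$, collapses this to $\frac{1}{s+1}\cos\frac{\pi}{2s+2}\big/\sin\frac{3\pi}{2s+2}$, as claimed. This sum evaluation --- in particular the correct handling of the $\sin\frac{m\pi}{2}$ factors and of the complementary-angle simplifications --- is the only genuine computation and the place where sign errors are most likely; everything else is bookkeeping.

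For the two inequalities I would argue not from the closed form but from the integral representation. Setting $x=t$ in \eqref{equiker} for $d=1$ gives $K_{1,s}(0,0)=\int_\R\big(1+\sum_{\ell=1}^s(2\pi u)^{2\ell}\big)^{-1}\,{\rm d}u$. As $s$ increases, each new term enlarges the denominator, so the integrand decreases pointwise and therefore $K_{1,s}(0,0)$ is monotonically decreasing in $s$. Dominated convergence (each integrand is bounded by the $s=1$ one, which is integrable) identifies the limit as $\int_{-1/(2\pi)}^{1/(2\pi)}(1-(2\pi u)^2)\,{\rm d}u=K_{1,\infty}(0,0)=2/(3\pi)$, matching \eqref{eq:Kinf00}. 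Monotonicity then yields $K_{1,\infty}(0,0)\le K_{1,s}(0,0)\le K_{1,1}(0,0)$ for every $s\ge 1$, and the endpoint $K_{1,1}(0,0)=\tfrac12$ follows either from $\wt K_1(0)=\tfrac12$ or by substituting $s=1$ into the closed form. I expect the sum evaluation of the middle paragraph, rather than these bounds, to be the main obstacle.
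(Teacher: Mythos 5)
Your proposal is correct, and it is in fact more self-contained than the paper's own treatment. For the two inequalities your argument coincides with the paper's: the paper likewise deduces that $K_{1,s}(0,0)=\wt K_s(0)$ is monotonically decreasing in $s$ from the integral formula of Theorem~\ref{thm1} (a larger $s$ only enlarges the denominator), and reads off the endpoint values $K_{1,1}(0,0)=\wt K_1(0)=\tfrac12$ and $K_{1,\infty}(0,0)=2/(3\pi)$ from the explicit formula for $\wt K_1$ and from \eqref{eq:Kinf00}; your dominated-convergence step identifying $\lim_{s\to\infty}K_{1,s}(0,0)=2/(3\pi)$ merely makes explicit why the decreasing sequence is bounded below by the $s=\infty$ value. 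The genuine difference is the middle equality: the paper does not prove the closed form $K_{1,s}(0,0)=\frac{1}{s+1}\cos\frac{\pi}{2s+2}\big/\sin\frac{3\pi}{2s+2}$ at all, but cites Hegland and Marti \cite[Corollary~1]{HM86} for it. You instead derive it from the paper's own formula \eqref{kernel_triang} by setting $t=0$, evaluating the Dirichlet-type sums $\sum_{j=1}^s\sin\frac{mj\pi}{s+1}=\cot\frac{m\pi}{2(s+1)}$ for $m\in\{1,3\}$, and collapsing $\cot\phi-\cot 3\phi=2\cos\phi/\sin 3\phi$ at $\phi=\frac{\pi}{2s+2}$. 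I checked these steps, including the sign bookkeeping from $\sin\frac{m\pi}{2}$ and the complementary-angle identities ($\sin\frac{s\pi}{2(s+1)}=\cos\frac{\pi}{2(s+1)}$ and $\sin\frac{3s\pi}{2(s+1)}=-\cos\frac{3\pi}{2(s+1)}$), and they are all correct; the result is also consistent with the listed values at $s=1,2$ and with the limit $2/(3\pi)$ as $s\to\infty$. What your route buys is a proof of the lemma entirely within the paper's framework; what the paper's route buys is brevity, at the cost of outsourcing the key identity to an external reference.
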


For finite smoothness $s$, the explicit equality
above was shown by Hegland and Marti
\cite[Corollary 1]{HM86}.
These authors also computed the limit for $s \to \infty$.
See also \cite{WKN08,WYT03} for more representations.

\section{Applications} \label{sec:app}
We briefly discuss two applications for which the knowledge of the
form of the reproducing kernel is very helpful.

\subsection{Embedding constants}

It is well-known that all information
of a reproducing kernel Hilbert space $H(K)$ is given by
the reproducing kernel
$K: D\times D\to \R$
of the space.
In particular, one can give an explicit formula for the embedding constant
in $L_\infty(D)$,
i.e., the maximal absolute function value that can be attained
by a function in the unit ball of $H(K)$.
This constant is the norm of the identity operator
$
I_K : H(K) \to L_\infty(D),
$
hence
\begin{equation}  \label{emb-const-general}
\Vert I_K  \Vert \,=\, \sup_{f\neq0}\,
\frac{\|f\|_{L_\infty(D)}}{\|f\|_{H(K)}}.
\end{equation}

The following result is known,
for convenience we  give a short proof.

\begin{lem}\label{lem:emb-const}
Let $H(K)$ be a reproducing kernel Hilbert space with reproducing
kernel
$K:D\times D\to \R$ for a nonempty $D\subseteq\R^d$.
For the embedding $I_K : H(K) \to L_\infty$ we have
\[
\Vert I_K \Vert \,=\, \sup_{x\in D}\, K(x,x)^{1/2}
\]
and, in particular, $\Vert I_K \Vert \,=\, K(0,0)^{1/2}$ if
$H(K)$ is translation invariant.
\end{lem}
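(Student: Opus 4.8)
The plan is to establish the two inequalities
$\|I_K\|\le\sup_{x\in D}K(x,x)^{1/2}$ and
$\|I_K\|\ge\sup_{x\in D}K(x,x)^{1/2}$ separately, using nothing beyond
the reproducing property and the Cauchy--Schwarz inequality.

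For the upper bound, I would fix an arbitrary $f\in H(K)$ and a point
$x\in D$ and apply the reproducing property
$f(x)=\il f,K(\square,x)\ir_{H(K)}$ together with Cauchy--Schwarz.
The key observation is that
$\|K(\square,x)\|_{H(K)}^2=\il K(\square,x),K(\square,x)\ir_{H(K)}=K(x,x)$,
which is itself just the reproducing property applied to the section
$K(\square,x)$. This yields $|f(x)|\le\|f\|_{H(K)}\,K(x,x)^{1/2}$, and
taking the supremum over $x\in D$ gives
$\|f\|_{L_\infty(D)}\le\|f\|_{H(K)}\sup_{x\in D}K(x,x)^{1/2}$, hence
$\|I_K\|\le\sup_{x\in D}K(x,x)^{1/2}$.

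For the lower bound, I would use the kernel sections themselves as test
functions. Fixing $x_0\in D$ and taking $f=K(\square,x_0)$, the
reproducing property gives $f(x_0)=K(x_0,x_0)=\|f\|_{H(K)}^2$, so that
$|f(x_0)|/\|f\|_{H(K)}=K(x_0,x_0)^{1/2}$ whenever $K(x_0,x_0)>0$ (the
degenerate case $K(x_0,x_0)=0$ forces $f\equiv0$ and may be discarded).
Since $\|f\|_{L_\infty(D)}\ge|f(x_0)|$, we obtain
$\|I_K\|\ge K(x_0,x_0)^{1/2}$, and taking the supremum over $x_0\in D$
closes the gap. Combining the two bounds establishes the stated
equality.

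The final assertion follows immediately: if $H(K)$ is translation
invariant then $K(x,x)=K(0,0)$ for every $x\in D$, so the supremum
collapses to the single value $K(0,0)^{1/2}$. The one step meriting a
word of care is the identification of the $L_\infty$ norm, an essential
supremum, with the pointwise supremum used above; this is legitimate
because under the embedding condition every $f\in H(K)$ is continuous,
so the two suprema coincide. I do not anticipate any genuine obstacle
here, as the argument is entirely standard and the lower-bound supremum
is in fact attained at each point through the explicit test function.
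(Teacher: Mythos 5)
Your proposal is correct and follows essentially the same route as the paper: the paper's proof is a condensed version of exactly this argument, noting that $\|\delta_x\|_{H(K)}=K(x,x)^{1/2}$ and leaving implicit the Cauchy--Schwarz upper bound and the use of the kernel sections $\delta_x=K(\square,x)$ as extremal test functions, both of which you spell out. Your additional remarks (the degenerate case $K(x_0,x_0)=0$ and the identification of the essential supremum with the pointwise supremum) fill in details the paper glosses over, but introduce no new idea.
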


\begin{proof}
We denote by $\delta_x(t)=K(t,x)$ the (representer of the) Dirac functional
in $H(K)$, i.e., $f(x)=\il f, \delta_x \ir_{H(K)}$.
Clearly, $K(x,t)=\il \delta_x, \delta_t \ir_{H(K)}$
and hence $\|\delta_x\|_{H(K)} = K(x,x)^{1/2}$.
{}From this we obtain the formula for the norm of $I_K$.
%
Finally, if $H(K)$ is translation invariant,  we clearly have $K(x,t)=K(x+s,t+s)$
and hence $K(x,x)=K(0,0)$.
This completes the proof.
\end{proof}

The embedding constant is important for many applications and
so we have another reason to know the kernel $K$ of a Hilbert space.
We will discuss one of these applications in the next section.

We now turn
to specific estimates of the embedding constant $\|I_{d,s}\|$
for the Sobolev spaces $W^s_2(\R^d)$ with reproducing kernel $K_{d,s}$,
where $I_{d,s}:=I_{K_{d,s}}$ is the embedding from $W^s_2(\R^d)$ to $L_\infty(\R^d)$.
Recall that the diagonal values of $K_{d,s}$ are given by
\[
K_{d,s}(x,x) \,=\, K_{d,s}(0,0) \,=\, \int_{\R^d}\frac{1}
{1+\sum_{0<|\alpha|_1\le s}\prod_{j=1}^d(2\pi\,u_j)^{2\alpha_j}}\,{\rm d}u,
\]
see Theorem~\ref{thm1}.
We change variables by
$t_j=2\pi u_j$,  and obtain from Lemma~\ref{lem:emb-const} that
$$
\|I_{d,s}\|^2 \,=\, \frac{1}{(2 \pi)^d}
\int_{\R^d} \frac{{\rm d} t}{
1+ \sum_{0 < |\alpha|_1 \le s}\prod_{j=1}^d t_j^{2\alpha_j}} .
$$
We use the multinomial identity for $\ell \in \{ 1, 2, \dots , s \}$
and obtain
$$
1+ \sum_{0 < |\alpha|_1 \le s}\prod_{j=1}^d t_j^{2\alpha_j}\ge
1+ \sum_{0 < |\alpha|_1 \le \ell}\prod_{j=1}^d t_j^{2\alpha_j}\ge
\frac1{\ell!}\left(1+\sum_{j=1}^\ell t_j^2\right)^{\ell}.
$$
Therefore
$$
\|I_{d,s}\|^2  \,\le\,
\frac{ \ell ! }{(2 \pi)^d}
\int_{\R^d} \frac{{\rm d} t}{(1 + \sum_{j=1}^d t_j^2)^\ell}
\,=\,
\frac{ \ell ! }{(2 \pi)^d}  \, \frac{2 \pi^{d/2}}{\Gamma (d/2)}
\int_0^\infty \frac{y^{d-1}}{(1+y^2)^\ell} \, {\rm d} y .
$$
The last integral is finite iff $2\ell -d \ge 1$.

Let $d=1$. Then $s\ge 1$ and we can take $\ell=1$. Then
$$
\int_0^\infty \frac{y^{d-1}}{(1+y^2)^\ell} \, {\rm d} y
=\int_0^\infty\frac1{1+y^2}\,{\rm d} y=\frac{\pi}2,
$$
and since $\Gamma(1/2)=\sqrt{\pi}$ we have
$$
\|I_{1,s}\|^2\le\frac12\le
\frac{(d+1)^2}{2}\ \frac1{2^d\pi^{d/2}}\bigg|_{d=1}.
$$
For $d\ge1$, we have
\begin{equation}\label{1919}
\int_0^\infty \frac{y^{d-1}}{(1+y^2)^\ell} \, {\rm d} y \le
\int_0^1y^{d-1}\,{\rm d}y\,+\,\int_1^\infty y^{-2\ell+d-1}\,{\rm d}y=
\frac1d+\frac1{2\ell-d}\,.
\end{equation}

Let $d=3$. Then $s\ge 2$ and we can take $\ell=2$. Since
$\Gamma(3/2)=\sqrt{\pi}/2$ then
$$
\|I_{3,s}\|^2\le \frac4{3\pi^2}\le
\frac{(d+1)^2}{2}\ \frac1{2^d\pi^{d/2}}\bigg|_{d=3}.
$$

Assume now that $d$ is even, $d=2k$ with $k\ge1$. Then $s>d/2$ means that
$s \ge k+1$ and we may take $\ell = k+1$. Hence $2 \ell -d = 2$.
Using this value of $\ell$, and remembering that
$\Gamma(d/2)=\Gamma(k)=(k-1)!$
we obtain
$$
\|I_{d,s}\|^2
\,\le\, \frac{2k (k+1)}{2^d \pi^{d/2}}\left(\frac1d+\frac12\right)\,
=\frac{(d+2)^2}4
  \,\frac{1}{2^d \pi^{d/2}}\,\le\,
\frac{(d+1)^2}2
  \,\frac{1}{2^d \pi^{d/2}}.
$$
Assume now that $d$ is odd, $d=2k+1$ with $d\ge5$.
Then $s > d/2$ means that, again,
we may take $\ell = k+1$ and $k\ge2$.
The Gamma function $\Gamma$ is monotone increasing for $x\ge2$.
Therefore $\Gamma(k+1/2)\ge \Gamma(k)=(k-1)!$, and we obtain
$$
\|I_{d,s}\|^2
\,\le\, \frac{2 k (k+1) }{2^d \pi^{d/2}}\,\left(\frac1d+1\right)\,\le\,
\frac{(d+1)^2}2
  \,\frac{1}{2^d \pi^{d/2}}.
$$
Hence, for all $d$ we have
$$
\|I_{d,s}\|^2\le \frac{(d+1)^2}2\,\frac1{2^d\pi^{d/2}}.
$$
Clearly, this means that $\|I_{d,s}\|$ goes exponentially fast to
zero when $d$ approaches infinity. Asymptotically, the speed of
convergence with respect to $d$ is
$(2^{1/2}\pi^{1/4})^{-d}=(0.531\dots)^d<(6/11)^d =(0.545\dots)^d.
$
It can be verified numerically that
for all values of $d$ we have
$\|I_{d,s}\|\le 10.03\,(6/11)^d$.

We can also obtain a lower bound on $\|I_{d,s}\|$.
It is clear that $\|I_{d,s}\|$ is
a decreasing function of $s$ and therefore it is lower bounded
for $s=\infty$, which is $(2/(3\pi))^{d/2}=(0.460\dots)^d
\ge (5/11)^d=(0.454\dots)^d$. We summarize these estimates in the
following theorem.
\begin{thm}\label{thm:emb}
Let $I_{d,s}$ be the embedding from $W^s_2(\R^d)$ to $L_\infty(\R^d)$. Then
for $d,s\in\N$ with $s>d/2$, we have
\[
\left(\frac{5}{11}\right)^{d} \,\le\,\left(\frac{2}{3\pi}\right)^{d/2}
\,=\, \|I_{d,\infty}\|
\,\le\, \|I_{d,s}\|
\,\le\, \frac{d+1}{2^{(d+1)/2} \pi^{d/4}}\
\,\le\, 10.03 \left(\frac{6}{11}\right)^{d} .
\]
\end{thm}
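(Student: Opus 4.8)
The plan is to collapse the entire statement to the single diagonal value $K_{d,s}(0,0)$ and then to sandwich that number. First I would apply Lemma~\ref{lem:emb-const}: because the kernel depends only on the difference $x-t$ (see the product form in~\eqref{equiker}), the space $W^s_2(\R^d)$ is translation invariant, so
\[
\|I_{d,s}\| \,=\, \sup_{x\in\R^d} K_{d,s}(x,x)^{1/2} \,=\, K_{d,s}(0,0)^{1/2}.
\]
By Theorem~\ref{thm1}, after the substitution $t_j=2\pi u_j$ this becomes
\[
\|I_{d,s}\|^2 \,=\, \frac{1}{(2\pi)^d}\int_{\R^d}\frac{{\rm d}t}{1+\sum_{0<|\alpha|_1\le s}\prod_{j=1}^d t_j^{2\alpha_j}},
\]
which is the common starting point for both the lower and the upper estimate.

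For the lower bound I would argue purely by monotonicity. Enlarging the set of multi-indices $\alpha$ only adds nonnegative terms to the denominator, so the integral, and hence $\|I_{d,s}\|$, is nonincreasing in $s$. Letting $s\to\infty$ and invoking the closed form~\eqref{eq:Kinf00}, namely $K_{d,\infty}(0,0)=(2/(3\pi))^d$, gives $\|I_{d,s}\|\ge\|I_{d,\infty}\|=(2/(3\pi))^{d/2}$. The leftmost inequality $(5/11)^d\le(2/(3\pi))^{d/2}$ is then a one-line numerical check: it reduces to $(5/11)^2=25/121\le 2/(3\pi)=0.2122\ldots$, which holds, and raising this base inequality to the power $d$ finishes it.

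The upper bound carries the real content. I would throw away all but the first $\ell$ squared variables using the multinomial inequality
\[
1+\sum_{0<|\alpha|_1\le s}\prod_{j=1}^d t_j^{2\alpha_j} \,\ge\, \frac{1}{\ell!}\Big(1+\sum_{j=1}^\ell t_j^2\Big)^\ell,
\]
valid for any $\ell\le s$, and then pass to radial coordinates to collapse the $d$-dimensional integral to $\int_0^\infty y^{d-1}(1+y^2)^{-\ell}\,{\rm d}y$, which converges exactly when $2\ell-d\ge1$. The natural choice is $\ell=\lfloor d/2\rfloor+1$, the smallest admissible value, so that $2\ell-d\in\{1,2\}$; bounding the radial integral crudely by $\tfrac1d+\tfrac1{2\ell-d}$ as in~\eqref{1919} and collecting the $\Gamma(d/2)$ factors yields, after separating even and odd $d$, the uniform estimate $\|I_{d,s}\|^2\le\frac{(d+1)^2}{2}\,\frac{1}{2^d\pi^{d/2}}$.

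Finally I would dispatch the last inequality $\frac{d+1}{2^{(d+1)/2}\pi^{d/4}}\le 10.03\,(6/11)^d$. The governing exponential rate is $(2^{1/2}\pi^{1/4})^{-1}=0.531\ldots<6/11=0.545\ldots$, so the ratio of the two sides behaves like $(d+1)(0.531/0.545)^d$, which tends to $0$ and is therefore bounded; pinning down the explicit constant $10.03$ requires checking the first few values of $d$ directly and confirming the sequence is eventually decreasing. I expect this numerical bookkeeping, together with the even/odd case split needed to absorb $\Gamma(d/2)$ into the clean $(d+1)^2/2$ form, to be the only genuinely fiddly part; the structural steps---translation invariance, monotonicity in $s$, and the multinomial reduction to a radial integral---are all routine once the formula of Theorem~\ref{thm1} is available.
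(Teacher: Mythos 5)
Your route coincides with the paper's own proof in every structural respect: Lemma~\ref{lem:emb-const} plus translation invariance reduces the norm to $K_{d,s}(0,0)^{1/2}$, the lower bound follows from monotonicity in $s$ together with \eqref{eq:Kinf00} and the numerical check $(5/11)^2=25/121\le 2/(3\pi)$, and the upper bound uses the same multinomial inequality, the same reduction to the radial integral $\int_0^\infty y^{d-1}(1+y^2)^{-\ell}\,{\rm d}y$ with $\ell=\lfloor d/2\rfloor+1$, and the same final numerics for the constant $10.03$. So this is not a different approach; the only question is whether your bookkeeping goes through.

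It does not quite, and the failure is concrete: your claim that the crude bound \eqref{1919}, combined only with an even/odd split, yields the uniform estimate $\|I_{d,s}\|^2\le\frac{(d+1)^2}{2}\,\frac{1}{2^d\pi^{d/2}}$ is false at $d=1$. There $\ell=1$, and \eqref{1919} gives $\int_0^\infty(1+y^2)^{-1}\,{\rm d}y\le \frac1d+\frac1{2\ell-d}=2$, hence the bound $\|I_{1,s}\|^2\le\frac{1}{2\pi}\cdot 2\cdot 2=\frac{2}{\pi}\approx 0.637$, which \emph{exceeds} the target $\frac{(d+1)^2}{2}\,\frac{1}{2^d\pi^{d/2}}\big|_{d=1}=\frac{1}{\sqrt{\pi}}\approx 0.564$. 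This is exactly why the paper evaluates the $d=1$ integral exactly, $\int_0^\infty(1+y^2)^{-1}\,{\rm d}y=\pi/2$, obtaining $\|I_{1,s}\|^2\le\frac12\le\frac{1}{\sqrt{\pi}}$. A second, smaller omission sits in your phrase ``collecting the $\Gamma(d/2)$ factors'': for odd $d=2k+1$ the clean form requires $\Gamma(k+1/2)\ge\Gamma(k)=(k-1)!$, which relies on monotonicity of $\Gamma$ on $[2,\infty)$ and hence needs $k\ge2$, i.e.\ $d\ge5$; the case $d=3$ must be computed directly with $\Gamma(3/2)=\sqrt{\pi}/2$ (there the crude bound does suffice, giving $\|I_{3,s}\|^2\le\frac{4}{3\pi^2}\le\frac{1}{\pi^{3/2}}$). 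Both gaps are easily repaired—treat $d=1$ exactly and $d=3$ directly, then run your generic argument for even $d$ and for odd $d\ge5$—but as written your uniform claim breaks at $d=1$.
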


\subsection{Strong polynomial tractability of integration}  \label{subsec:tract}
We now study the integration problem
$$
S_\rho (f) = \int_{D} f(x) \rho (x) \, {\rm d} x
\ \ \ \mbox{for} \ \ \  f \in H(K),
$$
where $H(K)$ is a reproducing kernel Hilbert space of integrable
functions defined on $D\subseteq\R^d$
with kernel $K$, and a probability density $\rho : \R^d \to \R^+_0$,
i.e., $\int_{D} \rho (x) \,  {\rm d} x =1$.

Consider a QMC algorithm
$$
A_n(f) = \frac{1}{n} \sum_{j=1}^n f(x_j)\ \ \ \mbox{for}\ \ \ f\in
H(K)
$$
for some points $x_1, x_2, \dots , x_n \in D$.
It is well known that the worst case error of $A_n$ is
\[\begin{split}
e_K(x_1, x_2, \dots , x_n) \,&:=\,
\sup_{\Vert f \Vert_{H(K)} \le 1 } \bigg| S_\rho(f) -
\frac{1}{n} \sum_{j=1}^n f(x_j) \bigg| \\
&\,=\,
\bigg\Vert \int_{\R^d}
\delta_x\, \rho (x) \, {\rm d} x - \frac{1}{n} \sum_{j=1}^n
\delta_{x_j} \bigg\Vert_{H(K)}
\end{split}\]
with $\delta_x(t)=K(t,x)$.

The function $h = \int_{\R^d} \delta_x \rho(x) \, {\rm d} x$, i.e.,
$$
h(t) = \int_{\R^d} K(t,x) \rho (x) \, {\rm d} x,
$$
is the representer of $S_\rho$, hence $S_\rho (f)= \il f, h \ir$.
It is also known that
if we average the square of the worst case error with respect to
$x_1, x_2, \dots , x_n$ distributed according to the densities $\rho$ then
$$
\int_{D^{n}} e_K^2 (x_1, x_2, \dots , x_n) \rho(x_1) \dots \rho(x_n) \,
{\rm d} x_1 \dots {\rm d} x_n \,\le\,
\frac{1}{n} \int_{\R^d} K (t,t) \rho(t) \, {\rm d } t.
$$
Hence, there exist points $x_1^*, x_2^*, \dots , x_n^* \in  D$ such that
$$
e_K (x_1^*, x_2^*, \dots , x_n^*) \le \frac{1}{\sqrt{n}}
\left( \int_{\R^d} K (t,t) \rho(t) \, {\rm d } t  \right)^{1/2} .
$$
{}From Lemma~\ref{lem:emb-const} we obtain
\begin{equation} \label{eq:error}
e_K  (x_1^*, x_2^*, \dots , x_n^*) \,\le\, \frac{ \Vert I_K \Vert }{\sqrt{n}}.
\end{equation}
Let $n(\e,H(K))$ be the information complexity of integration, i.e.,
the minimal number of function values needed to find an algorithm with
the (absolute) worst case error at most $\e$.
Then~\eqref{eq:error} yields
\begin{equation}\label{1313}
n(\e,H(K))\le
\left\lceil\left(\frac{\|I_K\|}{\e}\right)^2\right\rceil.
\end{equation}
We now apply the last estimates to integration in the space $W^s_2(\R^d)$.
The following theorem follows from~\eqref{eq:error},
\eqref{1313} and Theorem~\ref{thm:emb}.

\begin{thm}
Consider the integration problem $S_\rho$ given by
$$
S_{\rho_d} (f) = \int_{\R^d} f(x) \rho_d (x) \, {\rm d} x\ \ \
\mbox{for}\ \ \ f \in W^s_2(\R^d)
$$
with $s > d/2$ and a probability density $\rho_d$.
There exist
$x_1^*, x_2^*, \dots , x_n^* \in  \R^d$ such that
$$
e_{K_{d,s}} (x_1^*, x_2^*, \dots , x_n^*)
\,\le\, \frac{10.03}{\sqrt{n}} \left(\frac{6}{11}\right)^{d}.
$$
Furthermore,
$$
n(\e,d):=n(\e,W^s_2(\R^d))\le \left\lceil\,100.6009
  \left(\frac{6}{11}\right)^{2d}\,\frac1{\e^2}\right\rceil.
$$
Since $n(\e,d)=\mathcal{O}(\e^{-2})$ with the factor in the big
$\mathcal{O}$ notation independent of $d$, this means that
the  integration problem
is strongly polynomially tractable independently of the probability
densities $\rho_d$'s.
\end{thm}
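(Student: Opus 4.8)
The plan is to assemble the three ingredients already established in the excerpt: the worst case error bound for averaged QMC points, the resulting complexity estimate, and the exponential bound on the embedding constant from Theorem~\ref{thm:emb}. The statement is essentially a corollary of these, so the only substantive work is careful bookkeeping of the constants; I expect no genuine obstacle, which is consistent with the remark preceding the theorem that it ``follows from~\eqref{eq:error},~\eqref{1313} and Theorem~\ref{thm:emb}.''

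First I would invoke the averaging argument that produced~\eqref{eq:error}: integrating the squared worst case error over points drawn from $\rho_d$ gives the mean bound $\tfrac1n\int_{\R^d}K_{d,s}(t,t)\,\rho_d(t)\,{\rm d}t$, so some tuple $x_1^*,\dots,x_n^*\in\R^d$ must do at least as well as the mean. The crucial observation here is that $K_{d,s}(t,t)=K_{d,s}(0,0)$ is constant by translation invariance and finite because $s>d/2$; hence $\int_{\R^d}K_{d,s}(t,t)\rho_d(t)\,{\rm d}t=K_{d,s}(0,0)=\|I_{d,s}\|^2$ for every probability density $\rho_d$, by Lemma~\ref{lem:emb-const}. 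This is exactly what makes the bound, and therefore the tractability conclusion, uniform over all densities $\rho_d$. Thus $e_{K_{d,s}}(x_1^*,\dots,x_n^*)\le\|I_{d,s}\|/\sqrt n$.

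Next I would substitute the estimate $\|I_{d,s}\|\le 10.03\,(6/11)^d$ from Theorem~\ref{thm:emb}. This step is where the exponential decay in $d$ enters, and it is entirely inherited from the embedding analysis done earlier; inserting it yields the first displayed inequality $e_{K_{d,s}}(x_1^*,\dots,x_n^*)\le \tfrac{10.03}{\sqrt n}(6/11)^d$ at once.

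Finally I would feed this into the complexity estimate~\eqref{1313}, i.e.\ $n(\e,W^s_2(\R^d))\le\big\lceil(\|I_{d,s}\|/\e)^2\big\rceil$, and use $\|I_{d,s}\|^2\le(10.03)^2(6/11)^{2d}=100.6009\,(6/11)^{2d}$ to obtain the stated bound on $n(\e,d)$. Since $(6/11)^{2d}\le 1$ for all $d$, the right-hand side is at most $\lceil 100.6009\,\e^{-2}\rceil$, so $n(\e,d)=\mathcal O(\e^{-2})$ with an implied constant independent of $d$; this is precisely strong polynomial tractability for the absolute error criterion. The one minor subtlety I would flag is that the ceiling contributes an additive $1$, which for $\e<1$ is absorbed into the $\e^{-2}$ term (one may take any constant slightly above $100.6009$), so the conclusion is unaffected and holds simultaneously for all admissible densities $\rho_d$.
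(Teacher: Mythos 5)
Your proposal is correct and follows exactly the paper's route: it assembles the averaging bound behind \eqref{eq:error}, the complexity estimate \eqref{1313}, and the embedding bound $\|I_{d,s}\|\le 10.03\,(6/11)^d$ from Theorem~\ref{thm:emb}, which is precisely how the paper derives the theorem. Your added observation that $K_{d,s}(t,t)=K_{d,s}(0,0)$ makes the bound uniform over all densities $\rho_d$, and your remark about the ceiling, are accurate elaborations of the same argument, not a different approach.
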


\begin{rem}
We stress that this positive tractability result holds for the
\emph{absolute error criterion}.
When we use the \emph{normalized error criterion} then we
compare the error with the initial error $\Vert S_{\rho_d}\Vert$ for the
given density $\rho_d$, and consider
$$
n\left(\e\|S_{\rho_d}\|\right)=n\left(\e\|S_{\rho_d}\|,W^s_2(\R^d)\right).
$$
It is well known that $S_{\rho_d}$ is a well defined continuous linear
functional on $W^s_2 (\R^d)$ with
$$
\Vert S_{\rho_d} \Vert^2 =
\int_{\R^{2d}} K_{d,s} (x,t)\, \rho_d(x)\, \rho_d(t) \,
{\rm d } x {\rm d} t
\,\le\, \sup_{x,t\in\R^d} K_{d,s}(x,t)
= K_{d,s}(0,0).
$$
We now show that this bound is optimal for some $\rho_d$.
Indeed, consider an arbitrary continuous
probability density $\rho_d$ on $\R^d$ with compact support
that contains the origin.
Now, with $\rho_{d,\delta}(x):=\rho_d(x/\delta)/\delta$, we obtain
$\lim_{\delta\to0}\Vert S_{\rho_{d,\delta}} \Vert^2=K_{d,s}(0,0)$.

In general, if there exists a number $c\in(0,1]$ such that
$$
\|S_{\rho_d}\|\,\ge\, c\,K_{d,s}(0,0)^{1/2}\ \ \
\mbox{for all}\ \ \ d\in \N
$$
then strong polynomial tractability also holds for the normalized
error criterion.

However, if $\Vert S_\rho \Vert/K_{d,s}(0,0)^{1/2}$ goes to zero with
$d$ approaching infinity then we cannot conclude whether
the integration problem is tractable or not for the normalized error
criterion.
\end{rem}

\begin{rem} \label{rem:mix}
Observe that \eqref{eq:error} holds for arbitrary kernels and
arbitrary probability density functions $\rho$.
In particular, it holds for the tensor product Sobolev spaces
with the kernels
\begin{equation}
\wt K_{d,s} (x,t) = \prod_{j=1}^d K_{1,s} (x_j, t_j)\ \ \
\mbox{for all}\ \ \ x,t\in \R^d .
\end{equation}
In the last formula we can take arbitrary natural numbers $s$ and $d$,
the space is always a space of bounded continuous functions,
and again the embedding constant is given
by $\wt K_{d,s} (0,0)^{1/2}$.
For such spaces and arbitrary densities $\rho_d$  we obtain
the existence of
$x_1^*, x_2^*, \dots , x_n^* \in  \R^d$ such that
$$
e (x_1^*, x_2^*, \dots , x_n^*)
\le \frac{ \Vert \wt  I_{d,s} \Vert }{\sqrt{n}}
   = \frac{ \wt K_{d,s} (0,0)^{1/2} }{\sqrt{n}} =
\frac{K_{1,s} (0,0)^{d/2} }{\sqrt{n}}
   \le \frac{K_{1,1} (0,0)^{d/2} }{\sqrt{n}}
\le \frac{2^{-d/2}}{\sqrt{n}}  .
$$
Again,  all such integration problems are strongly polynomially tractable
for the absolute error criterion.
\end{rem}

\begin{rem}
Some readers might be puzzled since it is well known that,
for example, the integration problem
$$
           S_d (f) = \int_{[0,1]^d} f(x) \, {\rm d} x
$$
is \emph{not} polynomially tractable and
suffers from the curse of dimensionality
for many classical spaces, see \cite{NW08}
for a survey of such results.


In particular, this holds for the spaces
$H^1_{\rm mix}([0,1]^d)=W^1_2 ([0,1])\otimes\dots\otimes W^1_2 ([0,1])$,
i.e., the $d$-fold tensor product of the space $W^1_2 ([0,1])$,
where the domain of functions is restricted to the unit cube $[0,1]^d$.
For a detailed discussion of this, see Chapter~20 of~\cite{NW08}
and papers cited there.
The corresponding space on $\R^d$ is
$H^1_{\rm mix}(\R^d)=W^1_2 (\R)\otimes\dots\otimes W^1_2 (\R)$,
which is the (reproducing kernel) Hilbert space discussed
in Remark~\ref{rem:mix}.
For this space we consider integration with a probability measure
$\rho_d$
  and achieve even strong polynomial tractability.

Observe that a function
$f \in H^1_{\rm mix} ([0,1]^d)$
can always be extended to a function
$\tilde f \in H^1_{\rm mix}(\R^d)$ with $\tilde f\,\big|_{[0,1]^d} = f$.
However, the norms of $f$ and $\tilde f$ can be quite different
and hence from
$f$ being in the unit ball of $H^1_{\rm mix}([0,1]^d)$
we cannot conclude
that $\tilde f$ is in the unit ball of
$H^1_{\rm mix}(\R^d)$. In some sense, the unit ball of
$H^1_{\rm mix}(\R^d)$ is ``quite small'' and admits strong polynomial
tractability whereas the unit ball of $H^1_{\rm mix}([0,1]^d)$
is ``quite large'' and causes the curse of dimensionality.

\end{rem}

\section{Concluding remarks}  \label{S4}

In the final section we discuss a few issues
related to the previous considerations.

\subsection{Sobolev spaces with a different norm}

Due to Lemma \ref{lem1},
we can write the Sobolev space as
$$
 W^s_2(\R^d)=\{f\in L_2(\R^d)\,:\ v_{d,s}\,\F f\in L_2(\R^d)\},
$$
and the norm can be expressed as
$$
\|f\|_{W^s_2(\R^d)}=\|v_{d,s}\,\F f\|_{L_2(\R^d)}.
$$
In \cite{We05}, p.133, the following Sobolev space was used
for a real $s$ with $s>d/2$
$$
H^s(\R^d)=\{f\in L_2(\R^d)\,:\ (1+\|\cdot\|_2^2)^{s/2}\,\hat f\in
L_2(\R^d)\}
$$
with the inner product
$$
\il f,g \ir_{H^s(\R^d)}=(2\pi)^{-d/2}\int_{\R^d}(1+\|u\|^2_2)^s\,\hat f(u)\,
\overline{\hat g(u)}\, {\rm d}u,
$$
and the Fourier transform of $f$ given by
$$
\hat f (x) =(2\pi)^{-d/2}\int_{\R^d} f(u)\,\eu^{-\mathrm{i} x\cdot
  u}\,{\rm d}u,
$$
which differs from $\F$ by a factor depending on  $d$.

Neglecting a slightly different role of the factors,
the basic difference
between $W^s_2(\R^d)$ and $H^s(\R^d)$ is that the function $v_{d,s}$
for the space $W^s_2(\R^d)$ is now replaced by
$v_s=(1+\|\cdot\|_2^2)^{s/2}$
for the space $H^s(\R^d)$. Obviously $v_{d,s}\not=v_s$.
Therefore, although the norms of $W^s_2(\R^d)$ and
$H^s(\R^d)$ are equivalent, they have different reproducing kernels.
Namely, it is proved in \cite{We05} that the reproducing kernel
of $H^s(\R^d)$ is
$$
    K^\ast_{d,s}(x,t)=\frac{2^{1-s}}{(s-1)!}\,\|x-t\|_2^{s-d/2}\,B_{d/2-s}(\|x-t\|_2)
\ \ \ \ \mbox{for all}\ \ x,t\in \R^d,
$$
where $B_{d/2-s}$ is the modified Bessel function of the third kind.

As the reproducing kernel of $W_2^s(\R^d)$,
the reproducing kernel $K^\ast_{d,s}$ of $H^s(\R^d)$ can also
be expressed in the form of an integral,
see \cite[Theorem 10.12]{We05},
\begin{equation}\label{0909}
K^\ast_{d,s}(x,t)= (2\pi)^{-d/2}
\int_{\R^d}\frac{\exp\left(\mathrm{i}\,(x-t)\cdot u\right)}
{\left(1+\sum_{j=1}^d u_j^2\right)^{s}}\,{\rm
  d}u,
\end{equation}
for all  $x,t\in\R^d$,
where $x_j,t_j,u_j$ are components of $x,t,u\in\R^d$.

Note that $K^\ast_{d,s}$ depends on $\|x-t\|_2$, whereas $K_{d,s}$
depends on $x-t$.
Indeed, the norm (and/or scalar product) of $H^s(\R^d)$
is isotropic.
As
the norm for the space $W^s_2(\R^d)$,
the norm of $H^s(\R^d)$ can also be given by $L_2$ norms
of the derivatives.
One can show this as follows,
see also \cite[Section 1.3.5]{SS16} for similar calculations.

Using the formulas
of the Fourier transform for derivatives, we have
$$
\widehat{D^\beta f}(x)=
\hat{f}(x)\cdot \prod_{j=1}^d (\mathrm{i}\cdot x_j)^{\beta_j} ,\ \ \
 \beta\in \mathbb{N}_0^d,
$$
and
\[
\begin{split}
\left(1+\norm{u}_2^2\right)^s &= \sum_{\ell=0}^s \binom{s}{\ell}
\cdot \norm{u}_2^{2\ell}
= \sum_{\ell=0}^s \binom{s}{\ell}\cdot \sum\limits_{\beta \in \mathbb{N}_0^d\atop |\beta|_1=\ell}
\frac{\ell!}{\beta!}\cdot \prod_{j=1}^d u_j^{2\beta_j}\\
&= \sum_{\ell=0}^s \frac{s!}{(s-\ell)!} \sum\limits_{\beta \in \mathbb{N}_0^d\atop |\beta|_1=\ell}
\prod_{j=1}^d  \frac{(\mathrm{i} u_j)^{\beta_j}\overline{(\mathrm{i} u_j)^{\beta_j}}}{\beta_j!},
\end{split}
\]
where $|\beta|_1=\beta_1+\ldots+\beta_d$ and $\beta!=\prod_{j=1}^d (\beta_j!)$.

By Parseval's relation, see Grafakos \cite[Theorem 2.2.14]{LG08}, we obtain
\[
\int_{\R^d} \hat f(u)\,\overline{\hat g(u)}\ {\rm d}u=  \int_{\R^d}f(x)\,\overline{g(x)}\ {\rm d}x.
\]

Hence,
\begin{equation}\label{norm-radial}
\begin{split}
\il f,g\ir_{H^s}
&=  (2\pi)^{-d/2} \sum_{\ell=0}^s \frac{s!}{(s-\ell)!}
   \sum\limits_{\beta \in \mathbb{N}_0^d\atop |\beta|_1=\ell} \frac{1}{ \prod_{j=1}^d (\beta_j!)}
\il D^\beta f,D^\beta g\ir_{L_2(\R^d)}\\
&=  (2\pi)^{-d/2} \sum_{|\beta|_1\le s} \frac{|\beta|_1!}{\beta!} \cdot \binom{s}{|\beta|_1}
\il D^\beta f,D^\beta g\ir_{L_2(\R^d)}.
\end{split}
\end{equation}

We now
estimate the embedding constant $\|I^\ast_{d,s}\|$
for the Sobolev spaces $H^s(\R^d)$ with reproducing kernel $K^\ast_{d,s}$,
where $I^\ast_{d,s}:=I_{K^\ast_{d,s}}$ is
the embedding from $H^s(\R^d)$ to $L_\infty(\R^d)$.
{}From Lemma~\ref{lem:emb-const} and \eqref{0909} we have
$$
\|I^\ast_{d,s}\|^2 \,=\, K^\ast_{d,s}(0,0) \,=\,(2 \pi)^{-d/2}
\int_{\R^d} \left(1+ \|u\|_2^2\right)^{-s} {\rm d} u=
\frac2{2^{d/2}\,\Gamma(d/2)}\,
\int_0^\infty\frac{t^{d-1}}{(1+t^2)^s}\,{\rm d}t.
$$
Note that the embedding constant tends to infinity for
$s \to d/2$. Obviously, if we vary $d$ we must also vary $s=s(d)$ so
that $s(d)>d/2$. Assume that
\begin{equation}\label{beta}
\beta:=\inf_{d\in\natural} (2s(d)-d)>0.
\end{equation}
This assumption allows us to find a bound on $\|I^*_{d,s}\|^2$ only in
terms of $d$. Indeed, we estimate the last integral
by \eqref{1919} with $\ell=s(d)$. Then
$$
\int_0^\infty\frac{t^{d-1}}{(1+t^2)^{s(d)}}\,{\rm d}t\le
\frac1d+\frac1{2s(d)-d}\le1+\frac1{\beta}.
$$
This yields the following bound on $\|I^*_{d,s(d)}\|^2$.
\begin{thm}\label{thm:emb-radial}
Let $I^\ast_{d,s}$ be the embedding from $H^s(\R^d)$ to $L_\infty(\R^d)$
for a real $s=s(d)$ with $s>d/2$ and satisfying \eqref{beta}. Then
$$
\|I^\ast_{d,s}\|^2\le \frac{2(1+1/\beta)}{2^{d/2}\,\Gamma(d/2)}.
$$
\end{thm}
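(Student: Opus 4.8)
The plan is to reduce the operator norm to a single diagonal value of the reproducing kernel and then bound the resulting integral uniformly in $d$. First I would invoke Lemma~\ref{lem:emb-const}: since the kernel $K^\ast_{d,s}(x,t)$ depends only on $\|x-t\|_2$, and hence on $x-t$, the space $H^s(\R^d)$ is translation invariant, so $\|I^\ast_{d,s}\|^2 = K^\ast_{d,s}(0,0)$. Evaluating the integral representation \eqref{0909} at $x=t=0$ makes the exponential factor equal to $1$ and leaves $K^\ast_{d,s}(0,0) = (2\pi)^{-d/2}\int_{\R^d}(1+\|u\|_2^2)^{-s}\,{\rm d}u$.

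Next I would pass to radial coordinates. The integrand is a radial function of $u$, so the $d$-dimensional integral collapses to a one-dimensional one at the cost of the surface area of the unit sphere, namely $2\pi^{d/2}/\Gamma(d/2)$. Combining this factor with $(2\pi)^{-d/2}$ produces the prefactor $2/(2^{d/2}\Gamma(d/2))$ and the identity
$$
\|I^\ast_{d,s}\|^2 = \frac{2}{2^{d/2}\,\Gamma(d/2)}\int_0^\infty \frac{t^{d-1}}{(1+t^2)^{s}}\,{\rm d}t.
$$
Here the embedding hypothesis $s>d/2$ is exactly what guarantees convergence at infinity, since the integrand decays like $t^{d-1-2s}$ with $d-1-2s<-1$.

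The remaining task is to control the radial integral by a constant that does not depend on the particular value of $s(d)$. For this I would apply the elementary splitting \eqref{1919} with $\ell=s(d)$, bounding $(1+t^2)^{-s}$ by $1$ on $[0,1]$ and by $t^{-2s}$ on $[1,\infty)$ (both estimates are valid for real exponents, so no integrality of $s$ is needed). This yields $\int_0^\infty t^{d-1}(1+t^2)^{-s}\,{\rm d}t \le 1/d + 1/(2s-d)$. The decisive step is then the uniformity assumption \eqref{beta}: since $2s(d)-d \ge \beta$ for every $d$, the second term is at most $1/\beta$, while $1/d\le 1$; hence the integral is at most $1+1/\beta$ irrespective of $d$ and of the specific admissible choice of $s(d)$. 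Substituting into the displayed identity gives the stated bound.

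I do not expect a genuine obstacle, as each step is routine; the only point requiring care is ensuring that the estimate is uniform in $d$. Without \eqref{beta} the factor $1/(2s-d)$ could blow up as $s(d)\downarrow d/2$, so the real content of the theorem lies in isolating this quantity and tying it to the single parameter $\beta$, after which the decay of $2^{-d/2}/\Gamma(d/2)$ supplies the exponential smallness in $d$.
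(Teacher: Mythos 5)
Your proposal is correct and follows essentially the same route as the paper: Lemma~\ref{lem:emb-const} together with the representation \eqref{0909} gives $\|I^\ast_{d,s}\|^2=K^\ast_{d,s}(0,0)$, polar coordinates yield the prefactor $2/(2^{d/2}\Gamma(d/2))$, and the splitting \eqref{1919} with $\ell=s(d)$ combined with assumption \eqref{beta} bounds the radial integral by $1+1/\beta$. Your explicit observation that the elementary estimates behind \eqref{1919} remain valid for real (non-integer) exponents is a point the paper uses only implicitly, and it is worth noting.
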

It is well-known that $\Gamma(d/2)$ is super-exponentially large in $d$.
Therefore, the embedding constant for the norm of $H^s(\R^d)$ is
super-exponentially small in $d$, and it is much smaller
than the one for $W^s_2(\R^d)$.

Obviously, we can also consider the integration problem for the space
$H^s(\R^d)$. Then $\eqref{beta}$ implies strong polynomial
tractability of integration in the worst case setting and the absolute
error criterion.

\subsection{Another Sobolev space for $s=\infty$}

For the space $W^s_2(\R^d)$ we can take $s=\infty$, whereas for the
space $H^s(\R^d)$ the choice $s=\infty$ does not make much sense since
$v_s(u)=\infty$ for all $u\not=0$ and the space $H^\infty(\R^d)$
consists only of the zero function.

There are, however, other Sobolev spaces of functions of infinite
smoothness which formally corresponds to $s=\infty$. Furthermore,
the reproducing kernel of such a space
can be given by a properly normalized Gaussian
kernel.
In this section we present two definitions of the norm of such a Sobolev
space that lead to the Gaussian kernel. We do not know
whether these results are
known but we could not find a suitable reference in the
literature.

For $s=\infty$, we can define a Sobolev space by
$$
H^\infty_2(\R^d)=\{f\in L_2(\R^d)\,:\ \eu^{\|\cdot\|^2/4}\,\hat f\in
L_2(\R^d)\}
$$
with the radial symmetric inner product
\begin{equation}\label{product-radial-infty}
\il f,g \ir_{H^\infty_2(\R^d)}=\int_{\R^d}\eu^{\|u\|^2/2}\,\hat f(u)\,
\overline{\hat g(u)}\, {\rm d}u,
\end{equation}
where $\|\cdot\|$ denotes the Euclidean norm and
the Fourier transform $\hat f$
is defined as in the previous section.
Expanding $e^{\|u\|^2/2}$, we obtain
 \begin{equation}\label{product-radial-finite}
\begin{split}
\il f,g \ir_{H^\infty_2(\R^d)}&\,=\,\int_{\R^d}\hat f(u)\,
\overline{\hat g(u)}\,\left(\sum_{\ell=0}^\infty
\frac{\|u\|^{2\ell}}{2^\ell \cdot \ell!}\right)\, {\rm d}u\\
&\,=\, \sum_{\ell=0}^\infty\,
\frac{1}{2^\ell \cdot \ell!}\, \int_{\R^d}\hat f(u)\,
\overline{\hat g(u)}\,\|u\|^{2\ell}\  {\rm d}u  .
\end{split}
\end{equation}
We now use
\[
\norm{u}^{2\ell} =  \sum\limits_{\beta \in \mathbb{N}_0^d\atop
|\beta|_1=\ell} \frac{\ell!}{\beta!}\cdot \prod_{j=1}^d u_j^{2\beta_j}
=  \sum\limits_{\beta \in \mathbb{N}_0^d\atop |\beta|_1=\ell}
\frac{\ell!}{\prod_{j=1}^d (\beta_j!)} \prod_{j=1}^d  (\mathrm{i} u_j)^{\beta_j}\overline{(\mathrm{i} u_j)^{\beta_j}},
\]
as well as $\left(\prod_{j=1}^d
(\mathrm{i}\cdot u_j)^{\beta_j}\right)\hat{f}(u)= \widehat{D^\beta
f}(u)$,  and
\[
\int_{\R^d} \hat f(u)\,\overline{\hat g(u)}\ {\rm d}u=  \int_{\R^d}f(x)\,\overline{g(x)}\ {\rm d}x.
\]

We observe that
\begin{equation}\label{norm-radial2}
\begin{split}
\il f,g\ir_{H^\infty_2(\R^d)}
&\,=\,  \sum_{\ell=0}^\infty
\frac{1}{2^\ell}
\sum_{\beta \in \mathbb{N}_0^d} \frac{1}{ \prod_{j=1}^d (\beta_j!)}
\int_{\R^d} \widehat{D^\beta f}(u)\,\overline{\widehat{D^\beta g}(u)}\ {\rm d}u \\
&\,=\,  \sum_{\beta \in \mathbb{N}_0^d}
 \frac{1}{2^{|\beta|_1}\cdot\prod_{j=1}^d (\beta_j!)}
\il D^\beta f,D^\beta g\ir_{L_2(\R^d)}.
\end{split}
\end{equation}
These inner products are
invariant under orthogonal transformations in the sense that
$$
\abs{\il f,g\ir_{H^\infty_2(\R^d)}}=\abs{\il f\circ O,g\circ
  O\ir_{H^\infty_2(\R^d)}}
$$
for any orthogonal transformation $O\colon\R^d\to\R^d$,
and every $f,g\in H^\infty_2(\R^d)$.
This is easily seen by the formulas \eqref{product-radial-infty} and
\eqref{product-radial-finite}.

We now discuss the reproducing kernel of $H^\infty_2(\R^d)$
with respect to the inner
product (\ref{product-radial-infty}) or \eqref{norm-radial2}.
It is well-known that the function
$$
\delta_0(x) = (2\pi)^{-d/2}\eu^{-\|x\|^2/2}\ \ \ \mbox{for all}\ \ x\in\R^d
$$
is invariant
under the Fourier transform, i.e.,
$$
\hat\delta_0(x)=\delta_0(x) \ \ \ \mbox{for all}\ \
x\in \R^d.
$$
Hence, we obtain
{}from the definition that the Dirac delta $\delta_x, x\in \R^d$ in  $H^\infty_2(\R^d)$ is given by
$$
\delta_x(y) = (2\pi)^{-d/2}\eu^{-\|y-x\|^2/2}\ \ \ \mbox{for all}\ \
x,y\in \R^d.
$$
For this, note that
$$
\hat\delta_x(u) = \eu^{-{\rm i} x u} \hat\delta_0(u) =
\eu^{-{\rm i} x u} \delta_0(u) = (2\pi)^{-d/2} \eu^{-{\rm i} x u}
\eu^{-\|x\|^2/2}
$$
and
$$
\il f,\delta_x \ir_{H^\infty_2(\R^d)}=\int_{\R^d}\hat f(u)\,
\overline{\hat\delta_x(u)}\,\eu^{\|u\|^2/2}\, {\rm d}u=
 (2\pi)^{-d/2} \int_{\R^d}\hat f(x)\,
\eu^{{\rm i} x u} \, {\rm d}u= f(x).
$$

The reproducing kernel of  $H^\infty_2(\R^d)$ is therefore
the famous Gaussian kernel,
$$
K^{\,\rm rad}_{\infty}(x,y) = (2\pi)^{-d/2}\eu^{-\|x-y\|^2/2}
\ \ \ \mbox{for all}\ \ x,y\in \R^d.
$$
Let $I^{\,\rm rad}_{d,\infty}$ be the embedding
{}from $H^\infty_2(\R^d)$ to $L_\infty(\R^d)$. Then for $d\in\N$, we have
\[
 \|I^{\,\rm rad}_{d,\infty}\| =
K^{\rm rad}_{\infty}(0,0)^{1/2} =  (2\pi)^{-d/4}=(0.6316\dots)^d,
\]
which is larger than  $\|I_{d,\infty}\|=(2/(3\pi))^d=(0.4606\dots)^d$.

\subsection{Weighted multivariate Sobolev spaces}


Each variable
of $f\in W^s_2(\R^d)$ plays the same role. If we permute variables
in an arbitrary way then we obtain another function  from $W^s_2(\R^d)$
with the same norm as $f$. This property often leads to the curse of
dimensionality for many computational problems, see \cite{NW08},
in the worst case setting for the normalized error criterion.
That is why it seems
reasonable to treat various  variables and groups of variables
differently. This can be achieved by \emph{weighted} spaces.
We illustrate this concept for weighted Sobolev multivariate spaces.
Let
$$
\lambda=\{\lambda_{d,s,\alpha}\}_{d\in\N, |\alpha|\le s}
$$
be a family of positive numbers.
We then define the space
$W^{s,\lambda}_2(\R^d)$ as the space $W^s_2(\R^d)$ with
the redefined norm by
$$
\|f\|^2_{W_2^{s,\lambda}(\R^d)}=\sum_{|\alpha|_1\le s}
\lambda_{d,s,\alpha}\,\|D^\alpha
f\|^2_{L_2(\R^d)}.
$$
Note that for
\begin{eqnarray*}
\lambda_{d,s,\alpha}=1&\mbox{\ \ we have the space}& \ \
  W^s_2(\R^d),\\
\lambda_{d,s,\alpha}=\frac{|\alpha|_1!}{(2\pi)^{d/2}\,\alpha!}\,
\binom{s}{|\alpha|_1}
&\mbox{\ \ we have the space}&\ \  H^s(\R^d),\\
\lambda_{d,\infty,\alpha}=\frac1{2^{|\alpha|_1}\,\alpha!}&
\mbox{\ \ we have the space}&\ \  H_2^\infty(\R^d).
\end{eqnarray*}

The assumption that all $\lambda_{d,s,\alpha}<\infty$ is essential.
It is done for a good reason since if one of them $\lambda_{d,s,\alpha}=\infty$
and we adopt the convention
that $\infty\cdot 0=0$ then we must assume that
$D^\alpha f=0$ and $f$ must be in the kernel
of $D^\alpha$, i.e., it must be a polynomial of degree at most of
degree $\max(0,\alpha_j-1)$ for each variables $x_j$.
But the only polynomial that belongs to the space
$W^s_2(\R^d)$ is the zero polynomial, and therefore in this case
the whole space
degenerates to the zero element.

For positive and finite $\lambda$,
it is easy to check that the
reproducing kernel of the weighted Sobolev space
$W_2^{s,\lambda}(\R^d)$ is
$$
K_{d,s,\lambda}(x,t)=\int_{\R^d}
\frac{\exp\left(2\pi\,\mathrm{i}\,(x-t)\cdot u\right)}
{\lambda_{d,s,0}+\sum_{0<|\alpha|\le s}\lambda_{d,s,\alpha}\,
\prod_{j=1}^d(2\pi u_j)^{2\alpha_j}}\ {\rm d}u
\ \ \ \ \mbox{for all}\ \ x,t\in \R^d.
$$
In particular, if $\lambda_{d,s,0}=1$ and $\lambda_{d,s,\alpha}=\beta$
for all $\alpha$ with $|\alpha|\in(0,s]$ then
$$
K_{d,s,\lambda}(x,t)=\int_{\R^d}
\frac{\exp\left(2\pi\,\mathrm{i}\,(x-t)\cdot u\right)}
{1+\beta\,\sum_{0<|\alpha|\le s}
\prod_{j=1}^d(2\pi u_j)^{2\alpha_j}}\ {\rm d}u
\ \ \ \ \mbox{for all}\ \ x,t\in \R^d.
$$
 Note that for large $\beta$, the unit ball $\|f\|_{d,s,\lambda}\le1$
must have all $\|D^\alpha f\|_{L_2(\R^d)}$ small for nonzero~$\alpha$.
Clearly, the larger $\beta$ the smaller the unit ball. In general,
for appropriately
chosen $\lambda_{d,s,\alpha}$ we have a chance to break the curse of
dimensionality of many computational problems.

\subsection{More general Hilbert spaces}

For the Sobolev space $W^s_2(\R^d)$ the function $v_{d,s}$ from \eqref{vfun} was
instrumental in obtaining the reproducing kernel. We now show that
it is not a coincidence and a similar analysis can be done for many
functions $\nu$ which generate corresponding reproducing kernel Hilbert
spaces. We now outline this approach. We opt for simplicity and
consider the class of functions $\nu$ defined on~$\R^d$ from the class
$\M$ which is given by
\[
\M \,:=\, \left\{\nu\in C(\R^d):\;
\nu\ge1\; \text{ and }\; \nu^{-1}\in L_2(\R^d) \right\}.
\]
For $\nu\in\M$, consider the space
$\wt H^\nu$ of all $f \in L_1(\R^d) \cap L_2(\R^d)$ such that the $L_2$-norm
of $\nu\,[\F f]\in L_2(\R^d)$ is finite, and with the inner product
\begin{equation}\label{eq:generalH}
\il f, g\ir_\nu \,=\, \il \nu\,[\F f],\nu\,[\F g]\ir_{L_2(\R^d)},
\end{equation}
see also Lemma~\ref{lem1}.
We denote by $H^\nu$ the completion of $\wt H^\nu$.

Using a similar analysis as before it can be checked that
$H^\nu$ is a reproducing kernel Hilbert space and its reproducing
kernel is
$$
K_\nu(x,t) \,=\, \int_{\R^d}
\frac{\eu^{2\pi i (t-x)\cdot u}}{|\nu(u)|^2}\,\dint u\ \ \ \
\mbox{for all}\ \ x,t\in\R^d.
$$
Note that if we take $\nu=v_{d,s}$ then $H^\nu=W^s_2(\R^d)$ and the
formula for $K_\nu$ is the same as in Theorem \ref{thm1}.
Obtaining bounds or even explicit formulas for the embedding constants
and upper bounds for the error of integration can be found in the same way
as it was done in Section~\ref{sec:app}.

\section{Acknowledgment}

Part of this work was done while S. Zhang was visiting
the Theoretical Numerical Analysis group
of Friedrich-Schiller-Universit\"at Jena.
The kind hospitality is greatly appreciated.

\end{document}